\documentclass[hidelinks,onefignum,onetabnum]{siamart250106}

\definecolor{darkgreen}{rgb}{0.0, 0.55, 0.18}

\newcommand{\mycomment}[1]{%

\ifthenelse{\isodd{\value{page}}}{%
\makeatletter\normalmarginpar%
\marginpar{\tiny {#1}}%
}{%
\makeatletter\reversemarginpar%
\marginpar{\tiny {#1}}%
}}%

\usepackage[T1]{fontenc}      
\usepackage{type1cm}           
\usepackage{newtxtext}         
\usepackage[varvw]{newtxmath}  

\usepackage{graphicx}          
\usepackage{subcaption}        
\usepackage{tikz}              
\usepackage{wrapfig}           
\usepackage{epstopdf}          
\ifpdf
  \DeclareGraphicsExtensions{.eps,.pdf,.png,.jpg}
\else
  \DeclareGraphicsExtensions{.eps}
\fi

\usepackage{amsmath}           
\allowdisplaybreaks[1]

\usepackage{color}             
\usepackage{multicol}          
\usepackage{multirow}          
\usepackage{adjustbox}         
\usepackage[bottom]{footmisc}  

\usepackage{url}               
\usepackage{bm}                
\usepackage{ulem}              
\usepackage{comment}           
\usepackage{siunitx}           
\usepackage{algorithmic}       
\usepackage{makeidx}           
\usepackage{cleveref}          

\newcommand{\dd}{\mathop{}\!\mathrm{d}}

\definecolor{haesslichesblau}{rgb}{0.0,0.32,0.61}

\newcommand{\STAB}[1]{\begin{tabular}{@{}c@{}}#1\end{tabular}}

\newcommand{\condsum}[2]{\!\!\!\sideset{}{_{\raisebox{0.3mm}[0pt][0pt]{$\scriptstyle #1$}}}\sum_{\substack{\raisebox{-0.85mm}[0pt][0pt]{$\scriptstyle\hspace{0.8em}#2$}}}\!\!\!}

\usepackage{acronym}           

\acrodef{BDDC}{Balancing Domain Decomposition by Constraints}
\acrodef{CTW}{Controlled Tensile Weldability}
\acrodef{DDMs}{domain decomposition methods}
\acrodef{DOFs}{degrees of freedom}
\acrodef{FETI-DP}{Finite Element Tearing and Interconnecting -- Dual Primal}
\acrodef{GDSW}{Generalized Dryja-Smith-Widlund}
\acrodef{GMRES}{Generalized Minimal Residual}
\acrodef{LBW}{Laser Beam Welding}
\acrodef{PCG}{Preconditioned Conjugate Gradient}
\acrodef{PDE}{partial differential equation}
\acrodef{PDEs}{partial differential equations}
\acrodef{RGDSW}{Reduced Generalized Dryja-Smith-Widlund}
\acrodef{VEM}{Virtual Element Method}

\newsiamremark{remark}{Remark}
\newsiamremark{hypothesis}{Hypothesis}
\crefname{hypothesis}{Hypothesis}{Hypotheses}

\newsiamthm{claim}{Claim}

\headers{Two-level Schwarz Preconditioners for high-order discretizations in 2D}{T. Bevilacqua, A. Klawonn, M. Lanser and A. Wasiak}

\title{Two-level additive Schwarz preconditioners with GDSW coarse spaces for high-order discretizations in 2D}
\newcommand{\siscfundingnote}{\funding{This project has received funding from the Deutsche Forschungsgemeinschaft (DFG) as part of the Forschungsgruppe (Research Unit) 5134 ``Solidification Cracks During Laser Beam Welding -- High Performance Computing for High Performance Processing'' under DFG project number 434946896.}}

\newcommand{\siscdeptaffil}{\thanks{Department of Mathematics, University of Cologne, Germany}}
\newcommand{\sisccenteraffil}{\!\thanks{Center for Data and Simulation Science, University of Cologne, Germany}}
\newcommand{\siscemailfootnote}{\thanks{\email{E-mail: [tommaso.bevilacqua,axel.klawonn,martin.lanser,adam.wasiak]@uni-koeln.de}}}
\newcommand{\siscdeptmark}{\ensuremath{{}^{*}}}
\newcommand{\sisccentermark}{\ensuremath{\!{}^{\dagger}}}

\author{%
  Tommaso Bevilacqua\siscdeptaffil%
  \and Axel Klawonn\siscdeptmark\sisccenteraffil
  \and Martin Lanser\siscdeptmark\sisccentermark%
  \and Adam Wasiak\siscdeptmark%
  \phantom{\siscemailfootnote}.
}

\ifpdf
\hypersetup{
  pdftitle={},
  pdfauthor={T. Bevilacqua, A. Klawonn, M. Lanser and A. Wasiak}
}
\fi

\begin{document}

\maketitle
\begingroup
\renewcommand{\thefootnote}{}
\footnotetext{\siscfundingnote}
\addtocounter{footnote}{-1}
\endgroup
\begin{abstract}
Two-level overlapping Schwarz preconditioners with Generalized Dryja-Smith-Widlund (GDSW) and Reduced GDSW (RGDSW) coarse spaces for high-order finite element methods (FEM) and virtual element methods (VEM) are investigated and applied to the Poisson problem in 2D. The GDSW coarse space is a well-established robust choice for preconditioners in domain decomposition, while the RGDSW coarse space offers a competitive alternative by reducing the coarse problem dimension while maintaining robustness. On the theoretical side, a condition number bound for both preconditioners is proved for conforming $\mathbb{P}_k$ and $\mathbb{Q}_k$ finite element spaces with explicit dependence on the polynomial degree $k$. Through numerical experiments on structured triangular and quadrilateral meshes for FEM and on polygonal meshes for VEM, it is demonstrated that both GDSW and RGDSW maintain scalability properties across polynomial orders up to $k=9$ for selected degrees. The RGDSW approach achieves a reduction in the coarse problem dimension by a factor of approximately three compared to GDSW while maintaining competitive iteration counts.
\end{abstract}

\begin{keywords}
domain decomposition, overlapping Schwarz preconditioners, GDSW, RGDSW, virtual element method, high-order methods
\end{keywords}

\begin{MSCcodes}
65N55, 65N30, 65F08
\end{MSCcodes}

\section{Introduction}
\label{intro}
High-order discretizations of elliptic partial differential equations offer the well-known advantage of rapid convergence in the polynomial degree $k$, and are widely used in both academic and engineering applications~\cite{karniadakis_spectral_2005,szabo_finite_1991}.
Classical high-order finite element methods (FEM) achieve this on structured triangular and quadrilateral meshes.
The virtual element method (VEM), introduced by Beir\~{a}o da Veiga et al.\ in~\cite{beirao_basic_2013} and extended to high-order in~\cite{ahmad_equivalent_2013, beirao_dassi_russo_highorder_2017}, obtains the same goal on general polygonal and polyhedral meshes.
Rather than constructing explicit basis functions, the VEM defines local spaces implicitly and carries out all computations via projections onto polynomial spaces, exploiting as degrees of freedom (DOFs), i.e., nodal evaluations on vertex and edge nodes, and interior moments.

The solution of large sparse linear systems arising from high-order FEM and VEM discretizations requires efficient and scalable iterative solvers.
Domain decomposition methods, which partition the computational domain into overlapping or non-overlapping subdomains and combine local solvers with a coarse-level correction, are among the most effective approaches for parallel computing~\cite{toselli_widlund_ddm_2005}.
For classical high-order FEM, \ac{BDDC} and \ac{FETI-DP} preconditioners for spectral element discretizations were analyzed in~\cite{klawonn_pavarino_rheinbach_spectral_2008, pavarino_widlund_bddc_feti_spectral_2007, pavarino_widlund_zampini_bddc_spectral_2010}.
For VEM, the corresponding theory is less developed.
Within the class of non-overlapping methods, \ac{BDDC} and \ac{FETI-DP} preconditioners for VEM were analyzed in~\cite{bertoluzza_fetidp_2017} for two-dimensional elliptic problems and extended to three dimensions in~\cite{bertoluzza_fetidp_2020}.
Adaptive \ac{BDDC} preconditioners for high-order VEM discretizations of three-dimensional elliptic problems in mixed form were studied in~\cite{dassi_scacchi_zampini_bddc_2022}.
More recently, \ac{BDDC} preconditioners for VEM discretizations of the three-dimensional Stokes equations were developed and analyzed in~\cite{bevilacqua_bddc_stokes_vem_2024}, with numerical evidence of robustness with respect to the polynomial degree $k$.
Within the class of overlapping Schwarz methods, $h$-dependent condition number estimates for FEM are known; see e.g.~\cite{smith_domain_1991, toselli_widlund_ddm_2005}.
For high-order discretizations, the question of $k$-robustness is more delicate: Pavarino~\cite{pavarino_additive_1994} proved that, for continuous high-order FEM on structured quadrilateral meshes, a $\mathbb{Q}_1$ coarse space suffices to obtain a condition number bound independent of $k$, whereas Antonietti and Houston~\cite{antonietti_class_2011} showed that, for high-order discontinuous Galerkin methods, polynomial enrichment of the coarse space is required.
A robust choice of coarse space for general, possibly irregular domain decompositions is provided by the Generalized Dryja-Smith-Widlund (GDSW) coarse space, introduced by Dohrmann, Klawonn, and Widlund in~\cite{dohrmann_less_regular_2008, dohrmann_family_2008}.
The GDSW coarse space is constructed by decomposing the subdomain interface into vertices, edges, and (in 3D) faces, and associating to each component a coarse basis function obtained as the discrete harmonic extension of a local partition-of-unity function.
The Reduced GDSW (RGDSW) variant~\cite{dohrmann_design_2017} reduces the coarse problem dimension while maintaining robustness.

For virtual element methods, overlapping Schwarz methods have been studied in two dimensions by Calvo~\cite{calvo_approximation_2018, calvo_overlapping_2019}, with extensions to high-contrast problems~\cite{calvo_overlapping_2024} and irregular subdomain boundaries~\cite{calvo_virtual_2020}, but all these works are restricted to the lowest-order case $k=1$.
To the best of our knowledge, no analysis of GDSW-type overlapping Schwarz preconditioners for high-order VEM discretizations exists in the literature.

In this work, we study two-level additive Schwarz preconditioners with GDSW and RGDSW coarse spaces for the Poisson problem discretized by both high-order FEM and VEM in two dimensions.
On the theoretical side, we derive, to the best of our knowledge for the first time, a condition number bound for the GDSW and RGDSW preconditioners applied to conforming $\mathbb{P}_k$ and $\mathbb{Q}_k$ finite element spaces with explicit dependence on the polynomial degree $k$
\begin{equation*}
  \kappa(M^{-1}A) \leq C\!\left(1 + \frac{H}{\delta}\right)\!\left(1 + \log\!\left(\frac{H}{h}\right)\right)\!\left(1 + \log\!\left(\frac{H k^2}{h}\right)\right),
\end{equation*}
where $H$ is the maximum subdomain diameter, $h$ is the maximum element diameter, $\delta$ is the overlap width, and $M$ and $A$ are the preconditioner and stiffness matrix, respectively.
On the computational side, we demonstrate scalability and $k$-robustness of GDSW and RGDSW for both FEM and VEM, up to $k=9$, on structured quadrilateral and triangular meshes for both discretizations and on polygonal meshes for VEM.
The experiments use an in-house \textsc{Matlab} implementation.

Preliminary results were presented in~\cite{bevilacqua_parallel_schwarz_2025}.
The article is organized as follows. Section~\ref{sec:discretizations} introduces the model problem and the FEM and VEM discretizations. In \cref{sec:GDSW}~the GDSW and RGDSW preconditioners and their construction for high-order discretizations are described. The condition number bound is derived in \cref{sec:condbound}. Numerical results are presented in \cref{sec:results}.
\section{Model problem}
\label{sec:discretizations}

The continuous problem used throughout the paper is the two-dimensional Poisson equation on a bounded polygonal domain $\Omega$ with prescribed Dirichlet data $g\in H^{1/2}(\partial\Omega)$.
We reduce the problem to one with homogeneous boundary values by writing the solution as $u=\tilde{u}+w$, where $\tilde{u}$ is a harmonic extension of $g$ and $w\in H^1_0(\Omega)$.
For $f\in L^2(\Omega)$, the variational problem is given by
\[
a(w,v)=F(v)\qquad \forall v\in H^1_0(\Omega),
\]
where
\[
a(w,v)=\int_\Omega \nabla w\cdot\nabla v\,\dd x,\qquad
F(v)=\int_\Omega f v\,\dd x-\int_\Omega \nabla\tilde{u}\cdot\nabla v\,\dd x.
\]

\subsection{High-order finite element discretization}
\label{sec:fem_discretization}
 We introduce a shape-regular triangular tessellation $\mathcal{T}_h$ and a shape-regular quadrilateral tessellation $\mathcal{Q}_h$ of $\Omega$ with maximum element diameter $h$. This gives rise to two families of local approximation spaces.

\paragraph{Triangular meshes}
On each triangle $T \in \mathcal{T}_h$, the local space is the polynomial space
\begin{equation*}
\mathbb{P}_k(T) := \operatorname{span}\bigl\{x_1^{i_1} x_2^{i_2} : i_1 + i_2 \leq k\bigr\},
\end{equation*}
consisting of all polynomials of total degree of at most $k$. The \ac{DOFs} are nodal evaluations at:
\begin{itemize}
    \item the 3 vertices of $T$,
    \item for $k \geq 2$, values at $k-1$ equidistant interior points on each of the 3 edges, and,
    \item for $k \geq 3$, values at $\binom{k-1}{2}$ interior points within $T$.
\end{itemize}
The total number of \ac{DOFs} per triangle is $\binom{k+2}{2} = \dim (\mathbb{P}_k(T))$.
The stiffness matrix entries are computed via a symmetric quadrature rule on the reference triangle with an exactness degree of at least $2(k-1)$~\cite{schwab_p_hp_fem_1998}.

\paragraph{Quadrilateral meshes}
On each quadrilateral $Q \in \mathcal{Q}_h$, the local space is the tensor-product polynomial space
\begin{equation*}
\mathbb{Q}_k(Q) := \operatorname{span}\bigl\{x_1^{i_1} x_2^{i_2} : 0 \leq i_1, i_2 \leq k\bigr\}.
\end{equation*}
The \ac{DOFs} are nodal evaluations at:
\begin{itemize}
    \item the 4 vertices of $Q$,
    \item for $k \geq 2$, values at $k-1$ equidistant interior points on each of the 4 edges, and,
    \item for $k \geq 2$, values at $(k-1)^2$ equidistant interior points within $Q$.
\end{itemize}
The stiffness matrix entries are computed via Gauss--Legendre quadrature with $k+1$ points per direction, which integrates them exactly on affine elements.

\paragraph{Global spaces and discrete problem}
The corresponding global conforming finite element spaces are
\begin{align*}
V_{h,\mathcal{T}}^k &:= \bigl\{ v_h \in H^1_0(\Omega) : v_h|_{T} \hspace{0.45mm}\in\hspace{0.15mm}\, \mathbb{P}_k(T) \;\; \,\forall T \hspace{0.3mm}\in\hspace{0.3mm} \mathcal{T}_h \,\bigr\}, \\
V_{h,\mathcal{Q}}^k &:= \bigl\{ v_h \in H^1_0(\Omega) : v_h|_{Q} \in \mathbb{Q}_k(Q) \;\; \forall Q \in \mathcal{Q}_h \bigr\}.
\end{align*}
Regardless of the mesh type, the FEM approximation $u_h \in V_{h}^k$ satisfies
\begin{equation*}
a(u_h,v_h) = F(v_h) \quad \forall v_h \in V_{h}^k.
\end{equation*}
Since all \ac{DOFs} are nodal evaluations, the GDSW coarse space construction described in \Cref{sec:GDSW} applies directly in both cases.
For the theory of high-order Lagrange finite elements we refer to~\cite{brenner_mathematical_2008, ciarlet_finite_1978, schwab_p_hp_fem_1998}.

\subsection{Virtual element discretization}
\label{sec:vem_discretization}

The virtual element method extends finite element techniques to general polygonal meshes. Given a polygonal tessellation $\mathcal{T}_h$ of $\Omega$ with maximum element diameter $h$, we follow the enhanced VEM framework in \cite{ahmad_equivalent_2013} and define the local virtual element spaces and their \ac{DOFs}. We denote by $\mathcal{M}_\ell(S) := \{((\mathbf{x}-\mathbf{x}_S)/h_S)^{\mathbf{s}} : |\mathbf{s}| \leq \ell\}$ the space of scaled monomials of degree at most $\ell$ on the geometric object $S$ (edge, face, or element), with centroid $\mathbf{x}_S$ and diameter $h_S$, and by $\mathcal{M}^*_\ell(S)$ the subset of monomials of degree exactly $\ell$.

For each polygon $K$ with $n_K$ vertices, we first define the enlarged space
\begin{equation*}
\widetilde{V}_h^k(K) := \Big\{ v_h \in H^1(K) : \, v_h|_{\partial K} \in \mathbb{B}_k(\partial K), \; \Delta v_h \in \mathbb{P}_k(K) \Big\},
\end{equation*}
where $\mathbb{B}_k(\partial K)$ denotes the space of continuous functions on $\partial K$ that are polynomials of degree at most $k$ on each edge and $\Delta$ is the standard Laplace operator. The local enhanced virtual element space of order $k \geq 1$ is then the subspace
\begin{equation}\label{eq:vem_space_2d}
V_h^k(K) := \Big\{ v_h \in \widetilde{V}_h^k(K) : \, (v_h - \Pi^\nabla_k v_h, \, q^*)_{0,K} = 0 \;\; \forall q^* \in \mathcal{M}^*_{k-1}(K) \cup \mathcal{M}^*_k(K) \Big\},
\end{equation}
where $\Pi^\nabla_k : \widetilde{V}_h^k(K) \to \mathbb{P}_k(K)$ is the elliptic projector defined by~\cite{beirao_dassi_russo_highorder_2017}
\begin{equation*}
\int_K \nabla(v_h - \Pi^\nabla_k v_h) \cdot \nabla p \, \dd x = 0 \quad \forall p \in \mathbb{P}_k(K),
\end{equation*}
with  $\sum_{i} (v_h(\nu_i) - \Pi^\nabla_k v_h(\nu_i)) = 0$ for $k = 1$, where $\nu_i$, $i=1,...,n_K$ are the vertices of $K$, and $\int_K (v_h - \Pi^\nabla_k v_h) \, \dd x = 0$ for $k \geq 2$. This space satisfies $\mathbb{P}_k(K) \subseteq V_h^k(K)$ and has dimension $n_K k + k(k-1)/2$. Following~\cite{beirao_dassi_russo_highorder_2017}, the \ac{DOFs} for $V_h^k(K)$ are:
\begin{itemize}
    \item values of $v_h$ at the $n_K$ vertices of $K$,
    \item for $k \geq 2$, values of $v_h$ at the $k-1$ internal Gauss--Lobatto points on each edge of $K$,
    \item for $k \geq 2$, interior moments:
    $\frac{1}{|K|}\int_K q \, v_h \, \dd x \quad \text{for all } q \in \mathcal{M}_{k-2}(K)$.
\end{itemize}
The orthogonality constraint in \eqref{eq:vem_space_2d} ensures that the moments of order $k-1$ and $k$ of $v_h$ coincide with those of $\Pi^\nabla_k v_h$.

The $L^2$-projection $\Pi^0_k : V_h^k(K) \to \mathbb{P}_k(K)$ is defined by~\cite{beirao_dassi_russo_highorder_2017}
\begin{equation*}
\int_K (v_h - \Pi^0_k v_h) \, q \, \dd x = 0 \quad \forall q \in \mathbb{P}_k(K).
\end{equation*}
With the enhanced formulation of the VEM spaces it is ensured that the projectors $\Pi^\nabla_k$ and $\Pi^0_k$ are computable from the \ac{DOFs} alone.
With these projectors, the discrete bilinear form is constructed as 
\begin{equation*}
a_h(v_h,w_h) = a(\Pi^\nabla_k v_h, \Pi^\nabla_k w_h) + s_h(v_h - \Pi^\nabla_k v_h, w_h - \Pi^\nabla_k w_h),
\end{equation*}
where $s_h(\cdot,\cdot)$ is the d-recipe stability term \cite{mascotto_ill-conditioning_2018} which is computable from the \ac{DOFs}. The right-hand side term is given by $F_h(v_h) = F(\Pi^0_k v_h)$.

The VEM approximation $u_h \in V_h \subset H^1_0(\Omega)$ satisfies
\begin{equation*}
a_h(u_h,v_h) = F_h(v_h) \quad \forall v_h \in V_h,
\end{equation*}
where $a_h(\cdot,\cdot)$ and $F_h(\cdot)$ are consistent and stable approximations of $a(\cdot,\cdot)$ and $F(\cdot)$. For further details on mesh assumptions, stability analysis, and convergence properties, we refer to \cite{ahmad_equivalent_2013,beirao_basic_2013,beirao_stability_2017}.

\begin{remark}
  Since we use high-order spaces, we orthogonalize the basis functions of the local spaces $V_h^k(K)$ with respect to the $L^2$-inner product, for both finite and virtual elements. We use an orthonormal modal polynomial construction (and nodal reconstruction) for FEM and the methods described in \cite[Appendix A]{mascotto_ill-conditioning_2018} for VEM. This leads to better conditioning of the local stiffness matrices and improved numerical stability, especially for high polynomial degrees.
\end{remark}

\section{Overlapping Schwarz Method with GDSW coarse spaces}\label{sec:GDSW}

We use a two-level overlapping Schwarz preconditioner whose global correction is spanned by GDSW-type coarse functions~\cite{dohrmann_less_regular_2008, dohrmann_family_2008}.
In the notation of the assembled stiffness matrix, the preconditioner has the form
\[
M_{\mathrm{GDSW}}^{-1}
=\Phi A_0^{-1}\Phi^T+\sum_{i=1}^N R_i^TA_i^{-1}R_i.
\]
Here $R_i$ restricts to the overlapping subdomain $\Omega_i$, $A_i=R_iAR_i^T$, and the coarse matrix is $A_0=\Phi^TA\Phi$.
The difference between GDSW and RGDSW lies only in the choice of the columns of $\Phi$, that correspond to the coarse space basis functions.

\begin{wrapfigure}{r}{0.43\textwidth}
\vspace{-0.4\baselineskip}
\centering
\resizebox{0.41\textwidth}{!}{%
\begin{tikzpicture}[scale=1]
  \fill[blue!35] (0,0) rectangle (2,2);
  \fill[red!35] (2,0) rectangle (4,2);
  \fill[green!35] (0,2) rectangle (2,4);
  \fill[yellow!35] (2,2) rectangle (4,4);

  \fill[blue!12, opacity=0.6] (2,0) rectangle (2.4,2);
  \fill[blue!12, opacity=0.6] (0,2) rectangle (2,2.4);
  \fill[blue!12, opacity=0.6] (2,2) rectangle (2.4,2.4);

  \fill[red!12, opacity=0.6] (1.6,0) rectangle (2,2);
  \fill[red!12, opacity=0.6] (2,2) rectangle (4,2.4);
  \fill[red!12, opacity=0.6] (1.6,2) rectangle (2,2.4);

  \fill[green!12, opacity=0.6] (2,2) rectangle (2.4,4);
  \fill[green!12, opacity=0.6] (0,1.6) rectangle (2,2);
  \fill[green!12, opacity=0.6] (2,1.6) rectangle (2.4,2);

  \fill[yellow!12, opacity=0.6] (1.6,2) rectangle (2,4);
  \fill[yellow!12, opacity=0.6] (2,1.6) rectangle (4,2);
  \fill[yellow!12, opacity=0.6] (1.6,1.6) rectangle (2,2);

  \draw[very thick] (0,0) rectangle (4,4);

  \draw[thick, dashed, haesslichesblau] (2,0) -- (2,4);
  \draw[thick, dashed, haesslichesblau] (0,2) -- (4,2);

  \node[haesslichesblau] at (1.8, 0.9) {\scriptsize $\mathcal{E}_{12}$};
  \node[haesslichesblau] at (0.9, 2.2) {\scriptsize $\mathcal{E}_{13}$};
  \node[haesslichesblau] at (3.1, 2.2) {\scriptsize $\mathcal{E}_{24}$};
  \node[haesslichesblau] at (1.8, 3.1) {\scriptsize $\mathcal{E}_{34}$};

  \fill[haesslichesblau] (2,2) circle (4pt);
  \node[haesslichesblau, above right] at (2.05,2.05) {\scriptsize $\mathcal{V}$};

  \node at (1,1) {$\Omega_1$};
  \node at (3,1) {$\Omega_2$};
  \node at (1,3) {$\Omega_3$};
  \node at (3,3) {$\Omega_4$};

  \draw[<->, thick] (1.6,4.25) -- (2.4,4.25);
  \node[above] at (2,4.25) {\scriptsize $2\delta$};

\end{tikzpicture}%
}
\caption{Overlapping subdomains and interface components in two dimensions.}
\label{fig:gdsw-overlap-interface}
\vspace{-1.3\baselineskip}
\end{wrapfigure}
In order to construct them, we need to decompose the interface \ac{DOFs} of a non-overlapping decomposition into vertices $\mathcal{V}$ and edges $\mathcal{E}$, as sketched in \Cref{fig:gdsw-overlap-interface}, and define, for each component $\mathcal{P} \in \{\mathcal{V}, \mathcal{E}\}$, the interface function $\phi_{\mathcal{P}}$ with appropriate values on the \ac{DOFs} belonging to $\mathcal{P}$ and zero on the rest of the interface values. We note, that we will denote an edge shared by subdomains $\Omega_i$ and $\Omega_j$ as $\mathcal{E}_{ij}$, as shown in \Cref{fig:gdsw-overlap-interface}. Denoting with $A_{II}$ the submatrix of $A$ corresponding to interior \ac{DOFs}, and $A_{I\Gamma}$ the submatrix coupling interior to interface \ac{DOFs} (denoted by $\Gamma$), the interior part of the coarse function is then given by $\phi_I^{\mathcal{P}} = -A_{II}^{-1} A^{\vphantom{^{-1}}}_{I\Gamma} \phi_{\mathcal{P}}$, so that the coarse basis function associated with $\mathcal{P}$ is $\phi^{\mathcal{P}} = (\phi_I^{\mathcal{P}}, \phi_{\mathcal{P}})$. We note that here, each $\phi^{\mathcal{P}}$ is a vector in $\mathbb{R}^n$, where $n = \dim V_h^k$ is the number of \ac{DOFs} on the mesh $\mathcal{T}_h$; no separate coarse mesh is introduced. Representative coarse basis functions are illustrated in \Cref{fig:gdsw_rgdsw_3d_inner_modes}. For the discretization used in this work, all \ac{DOFs} on the interface are nodal evaluations, therefore the interface function $\phi_{\mathcal{P}}$ is simply set to 1 on all nodes belonging to $\mathcal{P}$ and 0 on the other interface components, at least for the standard version of GDSW. Denoting by $|\mathcal{V}|$ and $|\mathcal{E}|$ the number of vertices and edges of the non-overlapping interface decomposition, the standard GDSW coarse space is thus $\Phi \in \mathbb{R}^{n \times N_c}$, with $N_c = |\mathcal{V}| + |\mathcal{E}|$ for GDSW (one basis function per subdomain vertex and per subdomain edge) and $N_c = |\mathcal{V}|$ for RGDSW, and coarse matrix $A_0 = \Phi^T A \Phi \in \mathbb{R}^{N_c \times N_c}$.

\begin{figure}[b!]
\centering
\begin{subfigure}[t]{0.32\textwidth}
  \centering
  \includegraphics[width=\textwidth]{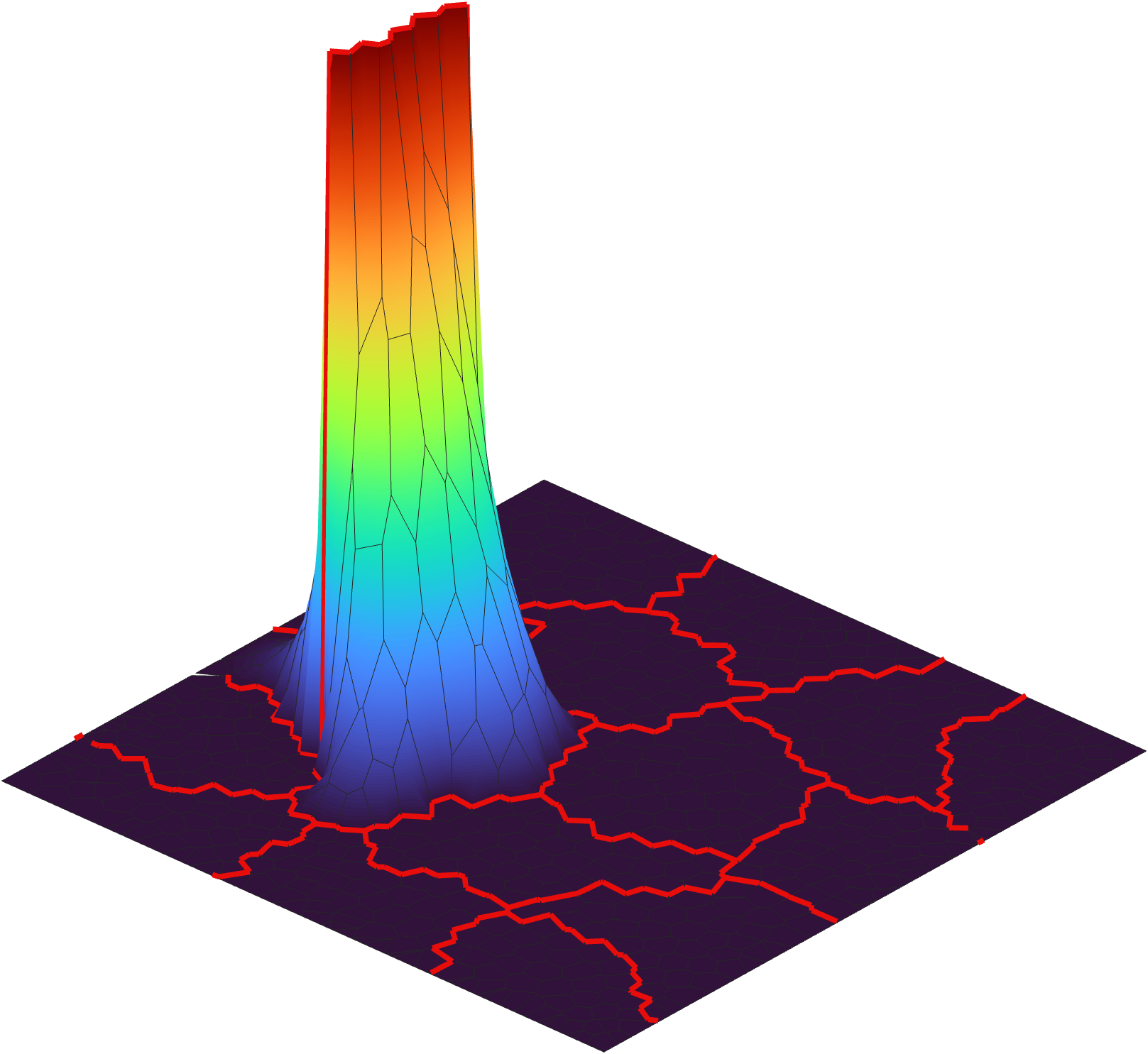}
  \caption{GDSW edge mode}
\end{subfigure}\hfill
\begin{subfigure}[t]{0.32\textwidth}
  \centering
  \includegraphics[width=\textwidth]{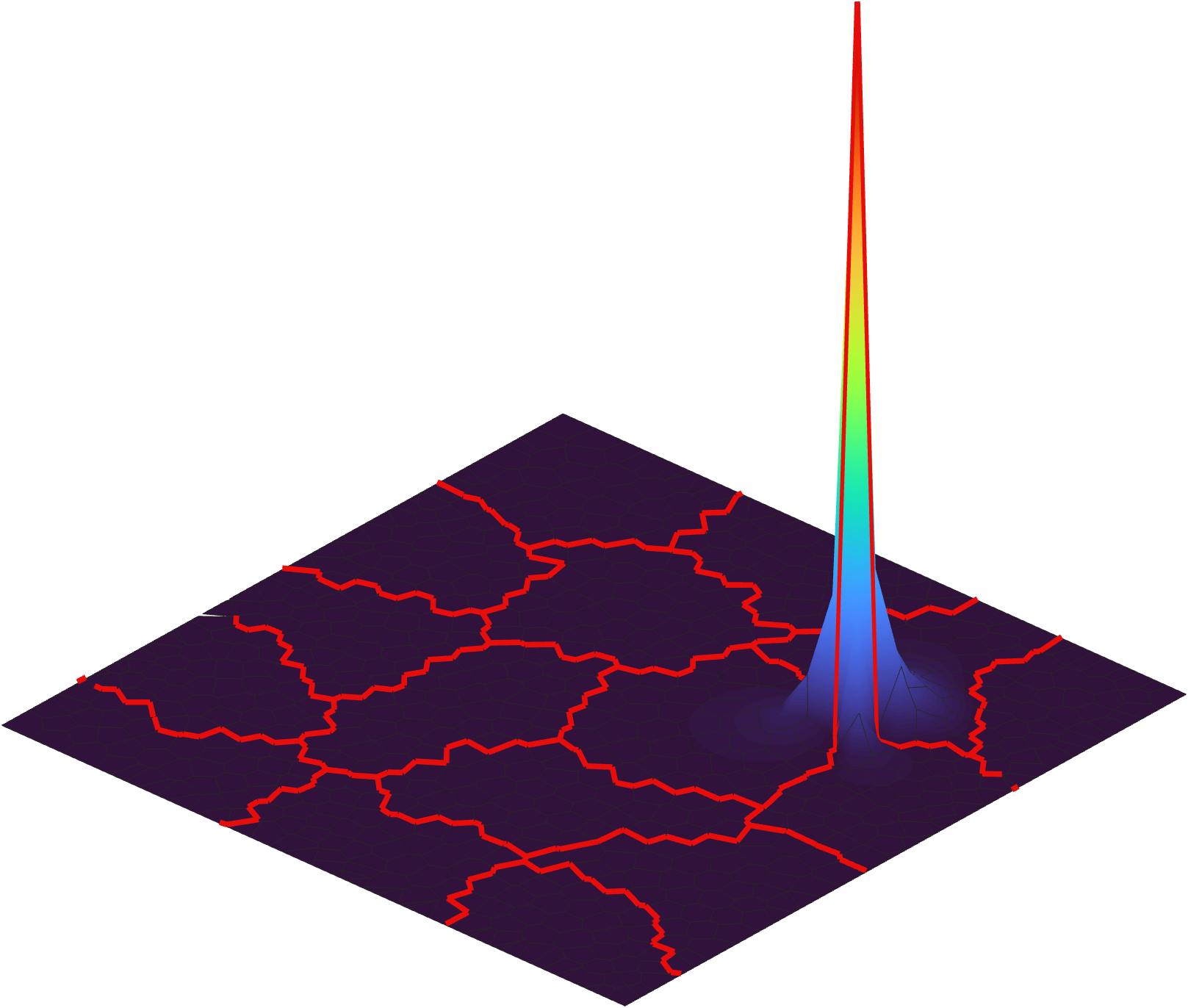}
  \caption{GDSW vertex mode}
\end{subfigure}\hfill
\begin{subfigure}[t]{0.32\textwidth}
  \centering
  \includegraphics[width=\textwidth]{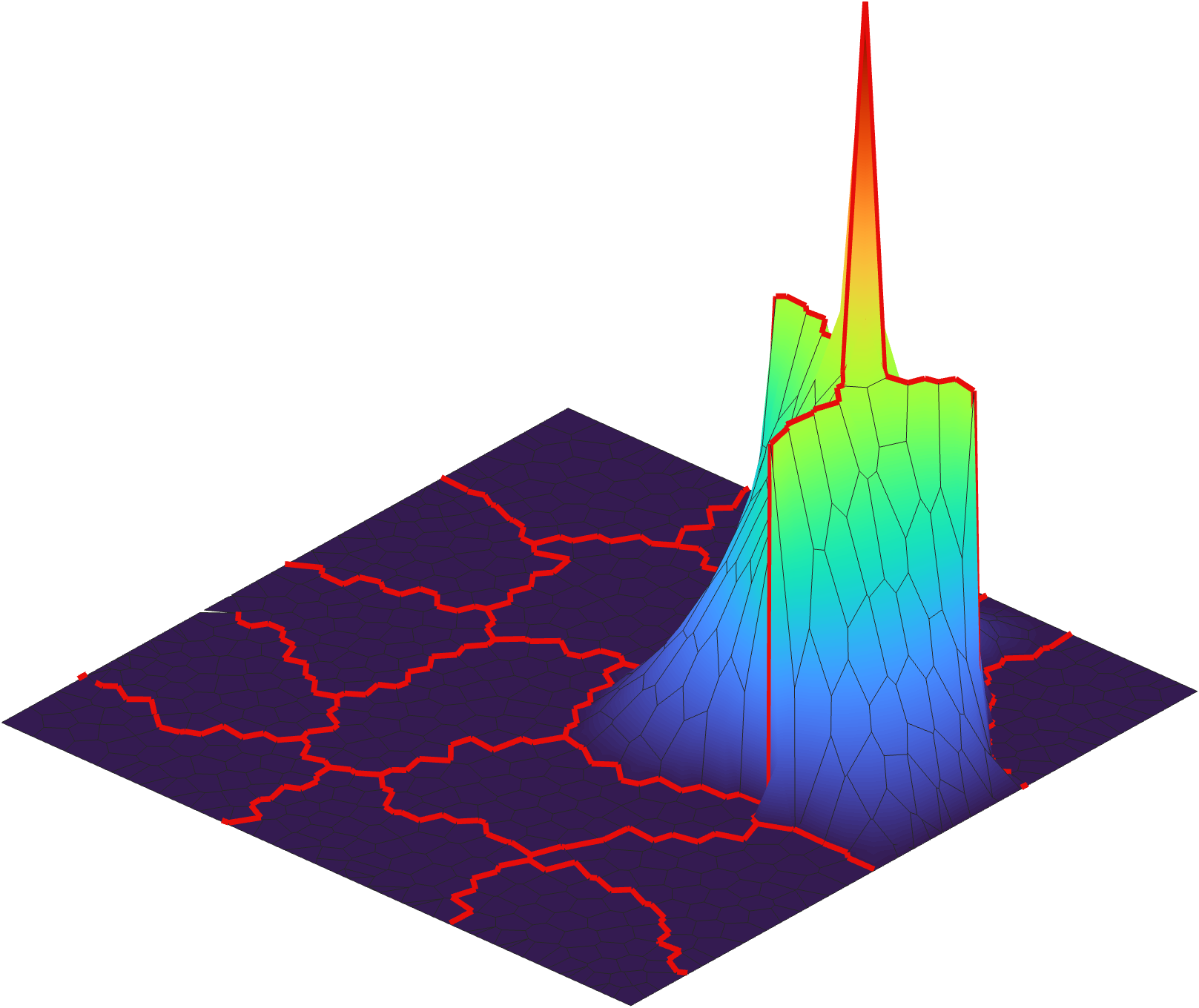}
  \caption{RGDSW mode}
\end{subfigure}
\vspace{-4mm}
\caption{{Representative coarse basis functions for GDSW (edge and vertex) and RGDSW on a polygonal mesh and a fixed METIS domain decomposition.}}\vspace{-9mm}
\label{fig:gdsw_rgdsw_3d_inner_modes}
\end{figure}

\subsection{RGDSW}

The Reduced GDSW (RGDSW) coarse space~\cite{dohrmann_design_2017} is a variant of the standard GDSW coarse space, designed to reduce the dimension of the coarse problem while maintaining robust convergence properties.
For each vertex $\mathbf{v}$ of the domain decomposition, we define the corresponding RGDSW basis function $\phi^{\mathcal{P}_{\mathbf{v}}}$ as follows. Let $\mathcal{E}_{\mathbf{v}}$ denote the (open) set of all edge \ac{DOFs} connected to $\mathbf{v}$ across different subdomains and let $n_{\mathcal{E}_\mathbf{v}}$ be the number of non-Dirichlet vertices adjacent to the edge.
The basis function $\phi^{\mathcal{P}_{\mathbf{v}}}$ is constructed by first prescribing the following values on the interface \ac{DOFs} using
\begin{equation*}
\mathcal{P}_{\mathbf{v}}:= \{\mathbf{v}\}\cup \mathcal{E}_{\mathbf{v}},\qquad\text{and}\qquad\phi_{\mathcal{P}_{\mathbf{v}}}(x) := \begin{cases}
1 & x=\mathbf{v}, \\
\tfrac{1}{n_{\mathcal{E}_\mathbf{v}}} & x \in  \mathcal{E}_{\mathbf{v}}, \\
0 & \text{else on the interface values}.
\end{cases}
\end{equation*}
Once these interface values are specified, the interior part of the basis function $\phi^{\mathcal{P}_{\mathbf{v}}}$, denoted by $\phi^{\mathcal{P}_{\mathbf{v}}}_I$ is computed by solving a local problem analogous to the GDSW case. The full basis function is then given by $\phi^{\mathcal{P}_{\mathbf{v}}} = (\phi^{\mathcal{P}_{\mathbf{v}}}_{I}, \phi_{\mathcal{P}_{\mathbf{v}}})$. Correspondingly, $M_\mathrm{RGDSW}^{-1}$ is defined by using the RGDSW basis functions in the coarse correction.

\subsection{GDSW and RGDSW for high-order discretizations}
\label{sec:gdsw_high_order}

The coarse basis functions are defined on the subdomain interface and then harmonically extended into the subdomain interiors by solving local Dirichlet problems, as described in \Cref{sec:GDSW}. For the Poisson problem, the coarse space must contain the constant function, since constants span the kernel of the gradient operator and represent the lowest-energy modes of the bilinear form $a(\cdot,\cdot)$. The standard GDSW and RGDSW constructions achieve this by setting interface \ac{DOFs} values to 1 on the relevant interface component and to 0 elsewhere.

In 2D, both high-order FEM and VEM admit the standard construction without any modifications. For FEM, all \ac{DOFs} are nodal evaluations at vertices and at interior points of edges, so assigning the value 1 to the \ac{DOFs} on a given edge or vertex component leads to the sum of the vertex and edge contributions to represent the constant function. For VEM, the interface \ac{DOFs} consist of vertex evaluations and pointwise evaluations on edges (see \Cref{sec:vem_discretization}); the interior element moments do not appear on the subdomain interface. Therefore, we apply the same rule, that is, interface \ac{DOFs} on the relevant component are set to 1, all others to 0.

\section{Condition number bound for GDSW and RGDSW}
\label{sec:condbound}
In this section we derive the condition number bound for the GDSW and RGDSW preconditioner for conforming finite element spaces of order $k$, including both $\mathbb{P}_k$ and $\mathbb{Q}_k$ spaces. The bound extends to RGDSW via a decomposition of RGDSW basis functions into GDSW components, as detailed at the end of the proof.
\begin{theorem}\label{thm:condbound}
  For $M^{-1}\in\{M_\mathrm{GDSW}^{-1}, M_\mathrm{RGDSW}^{-1}\}$,
    there exists a constant $C > 0$, depending only on the shape regularity of the mesh and independent of $h$, $H$, $k$, and $\delta$, such that
    \begin{equation*}
        \kappa(M^{-1}A) \leq C \Bigl(1 + \tfrac{H}{\delta}\Bigr)\Bigl(1+\log\Bigl(\tfrac{H}{h}\Bigr)\Bigr)\Bigl(1+\log\Bigl(\tfrac{Hk^2}{h}\Bigr)\Bigr),
    \end{equation*}
    where $k$ is the polynomial degree, $H$ is the maximum subdomain diameter, $h$ is the maximum element diameter, and $\delta$ is the overlap width.
\end{theorem}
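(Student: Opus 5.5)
We follow the abstract Schwarz framework of \cite{toselli_widlund_ddm_2005}. The upper bound $\lambda_{\max}(M^{-1}A)\le C$ follows from a standard coloring argument: each point of $\Omega$ lies in a bounded number of overlapping subdomains, and the coarse space counts as one extra subspace. The bound therefore reduces to a stable decomposition. For every $u\in V_h^k$ we must find $u_0\in\operatorname{range}(\Phi)$ and $u_i\in V_i$ with
\[
u=\sum_{i=0}^N u_i,\qquad \sum_{i=0}^N a(u_i,u_i)\le C_0^2\, a(u,u),
\]
where $C_0^2$ is the product of the three factors in \cref{thm:condbound}. As coarse component we take the GDSW interpolant
\[
u_0=\sum_{\mathcal{P}}\bar u_{\mathcal{P}}\,\phi^{\mathcal{P}}.
\]
Here $\bar u_{\mathcal{P}}$ is the nodal value $u(\mathbf{v})$ at a subdomain vertex, and the average of $u$ over a subdomain edge $\mathcal{E}$. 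Following \cite{dohrmann_less_regular_2008}, we then set $w=u-u_0$ and $u_i=I_h^k(\theta_i w)$, where $\{\theta_i\}$ is a partition of unity subordinate to the overlapping subdomains with $|\nabla\theta_i|\le C/\delta$, and $I_h^k$ is the nodal Lagrange interpolant of degree $k$.

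\textbf{Step 1 (local terms).} For the $u_i$ we use the standard estimate
\[
\sum_i a(u_i,u_i)\le C(1+H/\delta)\sum_j\bigl(|w|_{H^1(\Omega_j')}^2+H^{-2}\|w\|_{L^2(\Omega_j')}^2\bigr).
\]
For $k>1$ this requires stability of $I_h^k$ applied to (piecewise smooth $\theta_i$)$\times$(degree-$k$ polynomial). Elementwise one would instead use a $k$-independent bound for products with a $\mathbb{P}_1/\mathbb{Q}_1$ cutoff, following Pavarino \cite{pavarino_additive_1994}. We choose $\theta_i$ piecewise linear on the mesh, so that $\theta_i w$ is polynomial of degree $k+1$ elementwise; its interpolation error is controlled by an inverse-free estimate, and any $k$-dependence that appears would be absorbed into the last logarithm. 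The $L^2$ term is handled by Poincaré/Friedrichs on each $\Omega_j$, because $w$ has zero edge averages and vanishes at vertices. This reduces Step 1 to bounding $|u_0|_{H^1(\Omega_j)}$.

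\textbf{Step 2 (coarse term, the main step).} On each subdomain $\Omega_j$, $u_0$ is the discrete harmonic extension of its trace. Its energy is therefore bounded by the energy of any extension, and in particular
\[
|u_0|_{H^1(\Omega_j)}^2\le C\,|u_0|_{H^{1/2}(\partial\Omega_j)}^2 .
\]
Since constants are reproduced (vertex plus edge basis functions sum to $1$), we may subtract the mean of $u$ over $\Omega_j$. We then need two ingredients:
\begin{enumerate}
\item the edge cutoff estimate $\|\phi_{\mathcal{E}}\|_{H^{1/2}_{00}(\mathcal{E})}^2\le C(1+\log(H/h))$, together with control of the edge averages $|\bar u_{\mathcal{E}}-\bar u_{\Omega_j}|^2\le C|u|_{H^1(\Omega_j)}^2$ by a trace inequality;
\item a bound on the vertex values $|u(\mathbf{v})-\bar u_{\Omega_j}|^2$, for which we use the $hp$ discrete Sobolev inequality
\[
\|v\|_{L^\infty(\Omega_j)}^2\le C\bigl(1+\log(Hk^2/h)\bigr)\bigl(|v|_{H^1(\Omega_j)}^2+H^{-2}\|v\|_{L^2(\Omega_j)}^2\bigr),
\]
valid for degree-$k$ finite element functions (the effective mesh size is $h/k^2$).
\end{enumerate}
The key point is that the $\phi^{\mathcal{P}}$ are constant on edges, so the only logarithmic factor that does not involve $k$ comes from the $H^{1/2}_{00}$ norm of a piecewise-constant cutoff on an edge. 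This norm depends only on the distance to the endpoints, which is $\sim h$ (not $h/k$) because the nodes adjacent to the endpoint are where the jump is resolved. Combining the two factors multiplicatively gives
\[
a(u_0,u_0)\le C\bigl(1+\log(H/h)\bigr)\bigl(1+\log(Hk^2/h)\bigr)\,a(u,u).
\]

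I expect two difficulties here. The first is the $k$-dependence in the edge cutoff estimate. The first attempt is to bound the $H^{1/2}_{00}$ norm of the trace of $\phi_{\mathcal{E}}$ using its explicit piecewise-polynomial form on the first and last elements: equal to $1$ at interior edge nodes and $0$ at the endpoint, hence a degree-$k$ Lagrange polynomial on one element, whose seminorm must be shown to be $O(1)$ uniformly in $k$. If that fails, the weaker estimate $\log(Hk^2/h)$ could be substituted, at the cost of squaring the logarithm. The second difficulty is establishing the $hp$ discrete Sobolev inequality itself, for which we would invoke the spectral-element literature \cite{pavarino_widlund_bddc_feti_spectral_2007}.

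\textbf{Step 3 (RGDSW).} Each RGDSW function decomposes as
\[
\phi^{\mathcal{P}_{\mathbf{v}}}=\phi^{\mathbf{v}}+\sum_{\mathcal{E}\ni\mathbf{v}}n_{\mathcal{E}}^{-1}\phi^{\mathcal{E}} .
\]
We therefore use the RGDSW interpolant $u_0=\sum_{\mathbf{v}}u(\mathbf{v})\phi^{\mathcal{P}_{\mathbf{v}}}$. On an edge this replaces the edge average by a convex combination of the endpoint values. The difference from the GDSW interpolant is bounded by vertex–average differences, which the discrete Sobolev inequality again controls with the same factors, so the same bound holds.
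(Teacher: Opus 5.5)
Your architecture (coloring for $\lambda_{\max}$, stable decomposition with vertex values and edge averages, $u_i$ obtained by interpolating $\theta_i w$, RGDSW by splitting into GDSW functions) is the paper's. However, the two steps where the $k$-dependence is actually decided do not go through.

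\textbf{Step 1.} You need an operator $I$ that maps $\theta_i w$ (elementwise of degree $k+1$) back into $V_h^k$. It must be linear, local and reproduce $V_h^k$, so that $\sum_i I(\theta_i w)=w$ and supports stay in $\Omega_i'$. It must also be $H^1$-stable with a constant independent of $k$. The nodal Lagrange interpolant does not provide this. For the equidistant nodes of \Cref{sec:fem_discretization}, its $H^1$-stability constant on such products grows exponentially in $k$. The $k$-uniform product estimates in the spectral-element literature are tensor-product Gauss--Lobatto arguments for $\mathbb{Q}_k$, and they do not extend to $\mathbb{P}_k$ on triangles. This constant multiplies the entire bound, so it cannot be ``absorbed into the last logarithm''. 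The paper closes exactly this hole with the Eibner--Melenk degree-reduction projection $I_h^k:V_h^{2k}\to V_h^k$ of \Cref{hointerpolation}.

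A minor point: $w=u-u_0$ does not have zero edge averages, because near the subdomain vertices the edge function is not $1$ and the vertex functions contribute. The paper instead bounds $\|w\|_{L^2(\Omega_i)}\le\|\hat u\|_{L^2(\Omega_i)}+\|I^H_{\mathrm{GDSW}}\hat u\|_{L^2(\Omega_i)}$ using the $L^2$ bounds of \Cref{thm:edgelemma}.

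\textbf{Steps 2--3.} Your justification of $\|\phi_{\mathcal E}\|^2_{H^{1/2}_{00}(\mathcal E)}\le C(1+\log(H/h))$ uniformly in $k$ is wrong for functions equal to $1$ at every nodal DOF of $\mathcal E$. The edge node nearest a subdomain vertex lies at distance $h/k$ (equidistant) or of order $h/k^2$ (Gauss--Lobatto). The trace therefore rises from $0$ to $1$ over that distance, and $\int_{\mathcal E}\phi_{\mathcal E}^2\,\operatorname{dist}(x,\partial\mathcal E)^{-1}\,\dd x$ is of order $\log(Hk^2/h)$. By the trace theorem, the energy of every extension, including the discrete harmonic one, is bounded below by the same order. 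So your ``first attempt'' cannot give $\log(H/h)$ even if the one-element seminorm is $O(1)$: what matters is the $H^{1/2}_{00}$ weight at the endpoints.

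The paper gets a $k$-free factor by a different device. Its vertex/edge functions (\Cref{def:vertex-edge-fn}) have boundary values that are piecewise linear on the fine mesh. Hence the discrete harmonic $\mathbb{P}_1$ function with the same trace lies in $V_h^1\subset V_h^k$, and energy minimality gives $|\theta^{[k]}_{\mathcal E}|_{H^1(\Omega_i)}\le|\theta^{[1]}_{\mathcal E}|_{H^1(\Omega_i)}$, to which the $k=1$ edge lemma applies. The coarse term is then bounded by $\|u-\bar u_{\Omega_i}\|^2_{L^\infty(\Omega_i)}$ times these energies, see \eqref{eq:u0estimate}, with no extension theorem at all. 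These traces differ from the ``all edge DOFs equal one'' functions near the subdomain vertices, so your worry about the latter is well founded.

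Your fallback estimate is fatal for RGDSW. There the edge functions are weighted by vertex values, which carry the discrete Sobolev factor, so you would get $(1+\log(Hk^2/h))^2$. For GDSW, by contrast, your own accounting pairs the edge energy with the log-free edge-average bound, and the vertex values with $O(1)$ vertex-trace norms. The logarithms would then add rather than multiply. But this needs the step you assert ``in particular'': a discrete extension with $|\mathcal H_j g|_{H^1(\Omega_j)}\le C|g|_{H^{1/2}(\partial\Omega_j)}$ uniformly in $h$, $H$ and $k$. That is a nontrivial $hp$ polynomial-lifting result you would have to supply.
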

As far as we are aware, this is the first GDSW condition number bound with explicit $k$-dependence in the logarithmic factor.
\begin{remark}
  The proof of \Cref{thm:condbound} can be extended to the scalar elliptic problem with bilinear form $a(w,v)=\int_\Omega \rho\,\nabla w\cdot\nabla v\,\dd x$, where the coefficient $\rho > 0$ is piecewise constant or at least only slowly varying with respect to the non-overlapping decomposition. In this case, all estimates are carried out subdomain-wise, and the bound in \Cref{thm:condbound} holds with a constant $C$ that is independent of the values of $\rho$; cf.~\cite{dohrmann_less_regular_2008} for the corresponding result for linear finite elements.
\end{remark}
\subsection{Auxiliary results}
We begin by giving some basic definitions. For a domain $\Omega$, with diameter $H$, we use
\begin{align*}
  \|v\|^r_{L^r(\Omega)} &:= \int_\Omega |v|^r\,\dd x,\qquad 1\le r<\infty,\\
  |v|^2_{H^1(\Omega)} &:= \int_\Omega |\nabla v|^2\,\dd x,\\
  \|v\|^2_{H^1(\Omega)} &:= H^{-2}\|v\|^2_{L^2(\Omega)} + |v|^2_{H^1(\Omega)}, \quad {\rm and}\\
  \|v\|_{L^\infty(\Omega)} &:= \operatorname*{ess\,sup}_{x\in \Omega} |v(x)|.
\end{align*}

In order to prove the condition number bound for both preconditioners, we need generalized versions of the discrete Sobolev inequality, appropriate interpolation operators as in \cite[Lemma 4.31]{toselli_widlund_ddm_2005}, and the edge lemma, to elementwise high-order finite element spaces. We prove these here; for this we will need a Poincar\'e inequality~\cite{Hurri1990} throughout. We recall the definition of a John domain \cite[Section 3]{hajlasz_sobolev_john_2001}.

\begin{definition}[John domain]
  A bounded domain $\Omega \subset \mathbb{R}^2$ is called a John domain if there exists a constant $C_J \ge 1$ and a distinguished point $x_0 \in \Omega$ such that every point $x \in \Omega$ can be connected to $x_0$ by a rectifiable curve $\gamma : [0,1] \to \Omega$ with $\gamma(0) = x_0$, $\gamma(1) = x$, and
  \begin{equation*}
    |x-\gamma(t)| \le C_J\operatorname{dist}(\gamma(t), \partial \Omega) ,
  \end{equation*}
  for all $t \in [0,1]$.
\end{definition}

\begin{lemma}[Poincar\'e's inequality]\label{lem:poincare}
Let $\Omega\subset\mathbb R^2$ be a John domain. Then there exists a constant $C_{\mathrm{PF}}>0$, depending only on the shape of $\Omega$, such that
\begin{equation*}
  \|v\|^2_{L^2(\Omega)} \le C_{\mathrm{PF}} |\Omega| |v|^2_{H^1(\Omega)}
\end{equation*}
for all $v\in H^1(\Omega)$ with either zero mean on $\Omega$ or zero trace on a boundary portion of positive measure.
\end{lemma}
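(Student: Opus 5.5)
The plan is to reduce everything to a scale-invariant statement on a domain of unit size, prove the zero-mean case from the known Poincaré inequality for John domains, and then derive the zero-trace case from it. As in the lemma, the constant $C_{\mathrm{PF}}$ is allowed to depend on the shape of $\Omega$, meaning the John constant $C_J$ and, in the trace case, the relative size and position of the boundary portion.

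\emph{Step 1 (scaling).} Let $d=\operatorname{diam}(\Omega)$ and set $\hat v(\hat x)=v(d\hat x)$ on $\hat\Omega=d^{-1}\Omega$. Then $\hat\Omega$ is a John domain with the same constant $C_J$ and diameter $1$. Moreover
\[
\|v\|^2_{L^2(\Omega)}=d^2\|\hat v\|^2_{L^2(\hat\Omega)},\qquad |v|^2_{H^1(\Omega)}=|\hat v|^2_{H^1(\hat\Omega)}.
\]
Both the zero-mean condition and the zero-trace condition are invariant under this dilation. It therefore suffices to show $\|\hat v\|^2_{L^2(\hat\Omega)}\le \hat C |\hat v|^2_{H^1(\hat\Omega)}$ with $\hat C=\hat C(C_J)$, and then to compare $d^2$ with $|\Omega|$.

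For that comparison, trivially $|\Omega|\le \pi d^2$. Conversely, take $x\in\Omega$ with $|x-x_0|\ge d/2$, which exists by the definition of the diameter. Applying the John condition at $t=0$ gives $|x-x_0|\le C_J\operatorname{dist}(x_0,\partial\Omega)$. Hence $\Omega$ contains a ball of radius $d/(2C_J)$, so $|\Omega|\ge \pi d^2/(4C_J^2)$. Thus $d^2\le (4C_J^2/\pi)|\Omega|$, which yields the factor $|\Omega|$ in the lemma.

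\emph{Step 2 (zero mean).} For $\hat v$ with zero mean, I invoke Hurri's result~\cite{Hurri1990}: John domains support the $(2,2)$-Poincaré inequality $\|\hat v-\bar{\hat v}\|_{L^2(\hat\Omega)}\le C(C_J)\operatorname{diam}(\hat\Omega)\,|\hat v|_{H^1(\hat\Omega)}$. If a self-contained argument is preferred, the standard route is a Whitney-type chain argument. One proves the Poincaré inequality on each Whitney square, which is immediate for convex sets. The John curves then connect every square to a central square through a chain of comparable neighbouring squares. Summing the resulting telescoping differences of local means uses a Hardy-type (maximal function) estimate, whose constants depend only on $C_J$.

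\emph{Step 3 (zero trace on $\Gamma\subset\partial\Omega$, $|\Gamma|>0$).} Here I would argue by contradiction using compactness. Since Step 2 gives the zero-mean inequality, the embedding $H^1(\hat\Omega)\hookrightarrow L^2(\hat\Omega)$ is compact on John domains; this again follows from the Whitney decomposition, with the uniform Poincaré inequality controlling the tail near $\partial\hat\Omega$. Suppose there were $v_n$ with zero trace on $\Gamma$, $\|v_n\|_{L^2}=1$ and $|v_n|_{H^1}\to0$. A subsequence converges in $L^2$ to a constant $c$ with $|c|\,|\hat\Omega|^{1/2}=1$, so $c\neq 0$. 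But the trace on $\Gamma$ is continuous in $H^1$ along this sequence, whose $H^1$ norm converges, so $c$ must vanish on $\Gamma$. This forces $c=0$, a contradiction.

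An alternative to compactness is to split $\|v\|^2\le 2\|v-\bar v\|^2+2|\Omega|\bar v^2$. Since the trace of $v$ vanishes on $\Gamma$, one has $|\bar v|^2|\Gamma|\le\|\bar v-v\|^2_{L^2(\Gamma)}$, and a trace inequality on a Lipschitz neighbourhood of $\Gamma$ combined with Step 2 bounds this term.

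\emph{Expected obstacle.} The delicate part is Step 3. On a general John domain the boundary can be irregular, so both the trace and compactness must be justified for the relevant boundary portion. In the application, $\Omega$ is a subdomain made of shape-regular elements, so $\Gamma$ is a union of element edges, which is locally Lipschitz. I would therefore state explicitly that the trace is taken on such a Lipschitz portion of $\partial\Omega$. The constant then depends on $C_J$ and on the ratio $|\Gamma|/\operatorname{diam}(\Omega)$, which is what ``depending only on the shape of $\Omega$'' is taken to mean.
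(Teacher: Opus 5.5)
Your proposal is correct in substance, and for the zero-mean case it rests on the same ingredient as the paper. The paper gives no proof of this lemma and simply attributes it to Hurri's Poincar\'e inequality for John domains. You add two things the paper leaves implicit.
First, the dilation together with the John-ball argument $B(x_0,d/(2C_J))\subset\Omega$ gives $d^2\le (4C_J^2/\pi)|\Omega|$. This is exactly what turns the usual $\operatorname{diam}(\Omega)^2$ form of that inequality into the $|\Omega|$ form stated, and it shows the zero-mean constant depends on $C_J$ alone.
Second, you give an argument for the zero-trace case, which a mean-value inequality does not directly provide and which the paper does not address.
The paper's route buys brevity; yours buys completeness and an explicit constant.

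Step 3 has two flaws.

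\emph{Compactness.} Compactness of $H^1(\hat\Omega)\hookrightarrow L^2(\hat\Omega)$ does not follow from the $(2,2)$-Poincar\'e inequality of Step 2. That inequality does not make the $L^2$-mass of $v-\bar v$ in a thin boundary layer uniformly small. Indeed, there are bounded rooms-and-passages domains (not John) on which the Poincar\'e inequality holds but the embedding is not compact. On John domains compactness is true, but you need an inequality with a gain of integrability. For example, use the Sobolev--Poincar\'e inequality $\|v-\bar v\|_{L^q(\hat\Omega)}\le C\|\nabla v\|_{L^p(\hat\Omega)}$ with $1<p<2$ and $q=2p/(2-p)>2$, and then H\"older on the layer.

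\emph{Dependence of the constant.} The contradiction argument gives a constant for each fixed pair $(\hat\Omega,\hat\Gamma)$ and nothing uniform. So it does not support your closing claim that the constant depends only on $C_J$ and $|\Gamma|/\operatorname{diam}(\Omega)$, and that claim is false in general. Take $d=1$. A fine Koch-type bump packs a polygonal boundary piece of length about $1$ into a ball $B(x^*,r)$ with $r\ll 1$, without degrading $C_J$. The function $v=\min\{1,\max\{0,\log(|x-x^*|/r)\}/\log(r^{-1/2})\}$ vanishes on $\Gamma$ and satisfies $|v|^2_{H^1}\le 4\pi/\log(1/r)$. Its $L^2$-norm is bounded below via your Step 1. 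Hence $C_{\mathrm{PF}}\ge c\log(1/r)$.

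Your trace-based alternative is the better route. It is quantitative, with a constant depending on $C_J$, $\operatorname{diam}(\Omega)/|\Gamma|$ and the trace constant of $U$.

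For the only zero-trace use in the paper, $\Gamma=\partial\Omega_i$ in the edge lemma, neither argument is needed. Extend $v$ by zero and apply the one-dimensional Friedrichs inequality across a strip of width $d$; this gives $\|v\|^2_{L^2(\Omega)}\le (d^2/\pi^2)|v|^2_{H^1(\Omega)}$ on any bounded domain. Your Step 1 then yields $C_{\mathrm{PF}}\le 4C_J^2/\pi^3$, uniformly over subdomains with a common John constant. This uniformity is what the condition number proof actually requires.
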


\subsection{Discrete Sobolev inequality for high-order FEM}

The discrete Sobolev inequality for the high-order FEM case is shown in \cite{guo_cao_preconditioner_1996,oden_patra_feng_parallel_1997}. To keep the presentation self-contained, we give a different proof that extends the approach of \cite{toselli_widlund_ddm_2005} to the case of high-order finite elements, 
by proving a generalization of an inverse inequality that is used in the analysis of $hp$-DG and $hp$-FEM methods (e.g. \cite[Lemma 6]{cangiani_hp_dg_2017}) to a more general $L^r$-space. We note, that the discrete Sobolev inequality for VEM was shown for degree $k=1$ in \cite[Lemma 5.2]{calvo_approximation_2018}.
\begin{lemma}\label{lem:infty-Lr-simplex}
    Let $\mathcal{T}_h$ be a triangular or quadrilateral mesh satisfying the usual mesh regularity and uniformity assumptions, $K\in \mathcal{T}_h$, and $r\in[1,\infty)$. Then, for all $v\in\mathbb P_k(K)$ or $v\in\mathbb Q_k(K)$, respectively,
    \begin{equation*}
    \|v\|_{L^\infty(K)} \;\le\; C\,\left(\tfrac{k^2}{h_K}\right)^{\tfrac{2}{r}}\,\|v\|_{L^r(K)}.
    \end{equation*}
    \end{lemma}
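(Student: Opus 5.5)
The plan is to reduce the estimate to the reference element and then prove a polynomial inverse inequality there, with the sharp $k^2$ scaling. Let $F_K:\hat K\to K$ be the affine (for triangles) or bilinear, close to affine by shape regularity (for quadrilaterals), map from the reference simplex or square $\hat K$. Set $\hat v=v\circ F_K$. Then $\hat v\in\mathbb P_k(\hat K)$, respectively $\mathbb Q_k(\hat K)$, and
\[
\|v\|_{L^\infty(K)}=\|\hat v\|_{L^\infty(\hat K)},\qquad \|\hat v\|_{L^r(\hat K)}\le C\,h_K^{-2/r}\|v\|_{L^r(K)},
\]
because the Jacobian determinant is comparable to $h_K^2$. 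It therefore suffices to show
\[
\|\hat v\|_{L^\infty(\hat K)}\le C\,k^{4/r}\,\|\hat v\|_{L^r(\hat K)}
\]
for all $\hat v$ of degree $k$, with $C$ independent of $k$ and $r$.

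On the reference element I would argue by a local Markov/Bernstein argument. Let $x^*\in\hat K$ be a point where $|\hat v|$ attains $M:=\|\hat v\|_{L^\infty(\hat K)}$. Markov's inequality on a convex body gives $\|\nabla\hat v\|_{L^\infty(\hat K)}\le C k^2 M$; for the square this follows from the one-dimensional Markov inequality applied in each variable, and for the triangle from Wilhelmsen's Markov inequality for convex domains. Hence $|\hat v(x)|\ge M/2$ on the set $B=\{x\in\hat K:|x-x^*|\le 1/(2Ck^2)\}$. Since $\hat K$ is a fixed convex polygon with interior angles bounded below, $B$ contains a sector of radius $\sim k^{-2}$, so $|B|\ge c\,k^{-4}$. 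Consequently
\[
\|\hat v\|_{L^r(\hat K)}^r\ge (M/2)^r\,c\,k^{-4},
\]
which gives $M\le 2c^{-1/r}k^{4/r}\|\hat v\|_{L^r(\hat K)}$. The constant $c^{-1/r}\le\max(1,c^{-1})$ is uniform in $r\in[1,\infty)$. Combining this with the scaling step yields $\|v\|_{L^\infty(K)}\le C\,(k^2/h_K)^{2/r}\|v\|_{L^r(K)}$.

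The main obstacle is the quadrilateral case with a genuinely bilinear map, where the pullback need not be exactly $\mathbb Q_k$ in the mapped sense. I would handle it by working with the space pulled back to $\hat K$, which is exactly $\mathbb Q_k(\hat K)$ by the definition of the isoparametric space. Only the Jacobian bounds then enter, and these are controlled by shape regularity. A second point to check is that Markov's inequality on the triangle holds with the $k^2$ rate and a uniform constant. If needed, I would instead cover the triangle by the images of squares under the Duffy-free affine maps of three subquadrilaterals, and apply the tensor one-dimensional Markov inequality there.
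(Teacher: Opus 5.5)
Your proposal is correct and follows essentially the same argument as the paper. Like the paper, you rescale to the reference element (picking up the factor $h_K^{-2/r}$ in the $L^r$ norm), use a Markov inequality $\|\nabla\hat v\|_{L^\infty(\hat K)}\le Ck^2\|\hat v\|_{L^\infty(\hat K)}$, and bound $\|\hat v\|_{L^r(\hat K)}^r$ from below on a ball of radius $c/k^2$ around the maximizer, where $|\hat v|\ge M/2$ and whose intersection with $\hat K$ has area at least $c'k^{-4}$. The differences are minor: the paper simply cites Schwab's Theorem 4.76 for the Markov inequality on both element types, where you invoke Wilhelmsen's inequality or the tensor-product one-dimensional Markov inequality, and your extra care about uniformity of $c^{-1/r}$ in $r$ and about bilinearly mapped quadrilaterals covers points the paper passes over quickly.
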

    
\begin{proof}
Let $\hat{K}$ be the reference triangle or square, $v$ be a function on a physical element $K$, and $F_K: \hat{K} \to K$ be the transformation between the reference element and a physical element with $\hat{v}(\hat{x}) = v(F_K(\hat{x}))$. Using the transformation theorem, we find
\begin{equation*}
  \|\hat{v}\|_{L^r(\hat{K})} \le C h_K^{-
\tfrac{2}{r}} \|{v}\|_{L^r({K})}
\end{equation*}
where $C=1$ in the case of a triangle and $C=2$ in the case of a square; see \cite[Section 13.5]{ern_guermond_fem1_2021}.
Furthermore, we have 
\begin{equation}\label{eq:schwab-inequality}
  \|D^1\hat{v}\|_{L^\infty(\hat{K})} \le \bar{C} k^2\|\hat{v}\|_{L^\infty(\hat{K})}
\end{equation}
from \cite[Theorem 4.76]{schwab_p_hp_fem_1998}, for both triangles and squares with $\bar{C}\ge 0$ independent of $k$. 
Let $\hat{v}\in V_h^k(\hat{K})$ and let $\bar{x}\in \hat{K}$ be the point for which $|\hat{v}(\bar{x})| = \|\hat{v}\|_{L^\infty(\hat{K})}$. If $\|\hat{v}\|_{L^\infty(\hat{K})}=0$, there is nothing to prove. Otherwise, define $\hat{w}:=\operatorname{sign}(\hat{v}(\bar{x}))\hat{v}$. Then $\hat{w}(\bar{x})=\|\hat{v}\|_{L^\infty(\hat{K})}$, $\|\hat{w}\|_{L^\infty(\hat{K})}=\|\hat{v}\|_{L^\infty(\hat{K})}$, and $\|D^1\hat{w}\|_{L^\infty(\hat{K})}=\|D^1\hat{v}\|_{L^\infty(\hat{K})}$.

Now, for any $\hat{x}\in \hat{B}_\varepsilon(\bar{x})$, we have that
\begin{align*}
  \hat{w}(\hat{x}) &\ge \hat{w}(\bar{x}) - \|\hat{x} - \bar{x} \|\cdot \|D^1\hat{w}\|_{L^\infty(\hat{K})} \\
  & \ge \|\hat{v}\|_{L^\infty(\hat{K})} - \varepsilon \cdot \|D^1\hat{v}\|_{L^\infty(\hat{K})}
\end{align*}
In order for the last term to be greater than $\tfrac{1}{2} \|\hat{v}\|_{L^\infty(\hat{K})}$ we need to have that 
\begin{equation*}
  \varepsilon \cdot \|D^1\hat{v}\|_{L^\infty(\hat{K})} < \tfrac{1}{2} \|\hat{v}\|_{L^\infty(\hat{K})}
\end{equation*}
From \eqref{eq:schwab-inequality}, we know that we can choose $\varepsilon$ such that $\frac{1}{2\varepsilon} = \bar{C} k^2 \iff \varepsilon = \frac{1}{2\bar{C} k^2} =: \frac{C_0}{k^2}$ where $C_0 = \frac{1}{2\bar{C}}$ is a constant independent of $k$.
For this choice of $\varepsilon$, we have $|\hat{v}(\hat{x})|=|\hat{w}(\hat{x})|\ge \frac{1}{2}\|\hat{v}\|_{L^\infty(\hat{K})}$ for all $\hat{x}\in \hat{B}_\varepsilon(\bar{x})\cap\hat{K}$.
Then, we have that 
\begin{align*}
\|\hat{v}\|^r_{L^r(\hat{K})} = \int_{\hat{K}} |\hat{v}(\hat{x})|^r \dd \hat{x} \ge \int_{\hat{B}_{\varepsilon}(\bar{x})} |\hat{v}(\hat{x})|^r \dd \hat{x} \ge \tfrac{1}{2^r} \|\hat{v}\|_{L^\infty(\hat{K})}^r \cdot \text{Vol}(\hat{B}_{\varepsilon}(\bar{x})\cap \hat{K})
  \end{align*}
On the fixed reference triangle/square, the intersection $\hat{B}_{\varepsilon}(\bar{x})\cap \hat{K}$ has its measure bounded below by
\begin{align*}
  \text{Vol}(\hat{B}_{\varepsilon}(\bar{x})\cap \hat{K}) \ge c\varepsilon^2
\end{align*}
for a constant $c>0$ independent of $k$ and $\varepsilon$, and therefore, substituting $\varepsilon$, we find that
\begin{equation*}
  \|\hat{v}\|^r_{L^r(\hat{K})} \ge \tfrac{1}{2^r} \|\hat{v}\|_{L^\infty(\hat{K})}^r \cdot {c}\varepsilon^2= \tfrac{1}{2^r} \|\hat{v}\|_{L^\infty(\hat{K})}^r \cdot \tfrac{{c}\,C_0^2}{k^4}.
\end{equation*}
Rearranging and taking the $r$-th root of both sides, we find that 
\begin{equation*}
  \|v\|_{L^\infty({K})} = \|\hat{v}\|_{L^\infty(\hat{K})} \le 2\left( \tfrac{1}{{c} C^2_0} \right)^{\tfrac{1}{r}} (k^2)^{\tfrac{2}{r}} \|\hat{v}\|_{L^r(\hat{K})} \le C \left(\tfrac{k^2}{h_K}\right)^{\tfrac{2}{r}} \|v\|_{L^r({K})}
\end{equation*}
with $C = 4\left(\frac{{1}}{{c} C^2_0}\right)^{\frac{1}{r}}$.
\end{proof}
    \begin{theorem}[Discrete Sobolev inequality for high-order FEM]\label{thm:discrete-sobolev}
      Let $V_h^k\subset H^1(\Omega)$ be a conforming FEM space of degree $k \ge 1$ on a triangular or quadrilateral mesh $\mathcal T_h$ satisfying the usual mesh assumptions. Then, for all $u_h\in V_h^k$, we have
      \begin{equation*}
      \|u_h\|^2_{L^\infty(\Omega)} \;\le\; C\,
      \left(1+\log \left(\tfrac{Hk^2}{h}\right)\right)\|u_h\|^2_{H^1(\Omega)},
      \end{equation*}
      with $C>0$ independent of $h,H$, and $k$. 
      \end{theorem}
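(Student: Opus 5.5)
Following the approach of \cite[Lemma 4.15]{toselli_widlund_ddm_2005}, we first rescale $\Omega$ to unit diameter; by the choice of the scaled norm $\|\cdot\|_{H^1(\Omega)}$ it suffices to prove the bound for $H=1$ with mesh size $h/H$. We then combine two ingredients. The first is the continuous two-dimensional Sobolev-type inequality in $L^r$ with explicit dependence on $r$: for $v\in H^1(\Omega)$ on a (John/Lipschitz) domain of unit diameter,
\begin{equation*}
  \|v\|_{L^r(\Omega)} \le C\sqrt{r}\,\|v\|_{H^1(\Omega)}, \qquad 2\le r<\infty,
\end{equation*}
with $C$ independent of $r$. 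It can be obtained from the Trudinger--Moser embedding, or more elementarily by extending $v$ to a compactly supported function on $\mathbb{R}^2$, writing it as a Riesz potential of $\nabla v$ and using Young's inequality with the kernel $|x|^{-1}$ in $L^{s}$, $s<2$, which yields the $\sqrt{r}$ growth. Constant functions are handled through \Cref{lem:poincare}: we split $v=\bar v+(v-\bar v)$ and use $|\bar v|\le C\|v\|_{L^2}$.

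The second ingredient is the elementwise inverse estimate of \Cref{lem:infty-Lr-simplex}. We pick an element $K$ where $\|u_h\|_{L^\infty(\Omega)}$ is attained, so that
\begin{equation*}
  \|u_h\|_{L^\infty(\Omega)}=\|u_h\|_{L^\infty(K)}\le C\bigl(\tfrac{k^2}{h_K}\bigr)^{2/r}\|u_h\|_{L^r(K)}\le C\bigl(\tfrac{k^2}{h}\bigr)^{2/r}\|u_h\|_{L^r(\Omega)}.
\end{equation*}
Here we need the mesh uniformity assumption to replace $h_K$ by $h$. We also need the constant in \Cref{lem:infty-Lr-simplex} to be uniformly bounded in $r$; this holds since $C=4(cC_0^2)^{-1/r}\le 4\max(1,(cC_0^2)^{-1})$. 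Combining with the first ingredient gives
\begin{equation*}
  \|u_h\|^2_{L^\infty(\Omega)}\le C\, r\,\bigl(\tfrac{k^2}{h}\bigr)^{4/r}\|u_h\|^2_{H^1(\Omega)}.
\end{equation*}

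We then optimize in $r$, choosing $r=\max\{2,\,4\log(k^2/h)\}$ (with $h$ now meaning $h/H$). Then $(k^2/h)^{4/r}\le e$, and $r\le C(1+\log(Hk^2/h))$, which gives the claim after undoing the scaling.

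The main obstacle is the first step, namely securing the $\sqrt{r}$ dependence of the $L^r$ embedding constant uniformly over the subdomain shapes considered. For Lipschitz domains, I would first try a bounded extension operator to a ball combined with the explicit potential estimate. For John domains, I would instead rely on a local Poincaré or chaining argument, which gives the same exponential-integrability (Trudinger) estimate.
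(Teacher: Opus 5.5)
Your proof is correct and uses the same ingredients as the paper. These are rescaling to $H=1$, the $k^2$-inverse estimate of \Cref{lem:infty-Lr-simplex} (you rightly check that its constant is bounded uniformly in $r$), an $L^r$ Sobolev bound with constant $C\sqrt{r}$, and the choice $r\sim 1+\log(k^2/h)$.

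The genuine difference is where the Sobolev inequality enters. The paper applies \eqref{eq:sobolev-H1} on each element $K$ and then passes to $\|u_h\|_{H^1(\Omega)}$ via $\max_K a_K\le(\sum_K a_K^2)^{1/2}$. You instead first use $\|u_h\|_{L^r(K)}\le\|u_h\|_{L^r(\Omega)}$ and apply the Sobolev inequality once, on the unit-diameter domain, as in \cite[Lemma 4.15]{toselli_widlund_ddm_2005}.

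Your ordering is the one that actually produces the logarithm.
\begin{itemize}
\item A Sobolev constant depending only on the shape of $K$ holds for the $h_K$-scaled norm. But then $\sum_K\|u_h\|^2_{H^1(K)}$ contains $\sum_K h_K^{-2}\|u_h\|^2_{L^2(K)}$, which is not controlled by $\|u_h\|^2_{H^1(\Omega)}$ uniformly in $h$.
\item With the unscaled norm the sum identity holds, but the elementwise constant grows like $h_K^{2/r-1}$ (test with $v\equiv 1$). Combined with $(k^2/h_K)^{2/r}$ this gives $k^{4/r}h_K^{-1}$, i.e.\ only an $h^{-1}$-type bound with no logarithm.
\end{itemize}

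The price of your route is that the $\sqrt{r}$ (Trudinger-type) embedding is needed on the whole (sub)domain, with a constant uniform over the admissible shapes. This matters because the theorem is later applied on the John subdomains $\Omega_i$. You correctly identify this as the main obstacle, and it is exactly what the $k=1$ analysis on John domains also requires.

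Your final step is also fine. After scaling, $k^2/h\ge 1$, so $r=\max\{2,4\log(k^2/h)\}$ gives $(k^2/h)^{4/r}\le e$ and $r\le 4\bigl(1+\log(k^2/h)\bigr)$.
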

    \begin{proof}

Let $K\in\mathcal T_h$ and $v_h\in V_h^k(K)$. Without loss of generality, we assume $H=1$. Lemma~\ref{lem:infty-Lr-simplex} implies
\[
\|v_h\|_{L^\infty(K)}
\;\le\; C\left(\tfrac{k^2}{h_K}\right)^{2/r}
\|v_h\|_{L^r(K)}.
\]
For each $r\ge1$, the standard Sobolev inequality in 2D yields
\begin{equation}\label{eq:sobolev-H1}
\|v\|_{L^r(K)} \;\le\; C\,r^{1/2}\,\|v\|_{H^1(K)}
\qquad\forall v\in H^1(K),
\end{equation}
with $C$ depending only on the shape of $K$. Taking the supremum over $K\in\mathcal{T}_h$, applying Lemma~\ref{lem:infty-Lr-simplex} and \eqref{eq:sobolev-H1} on each element, and using $(k^2/h_K)^{2/r}\le C(k^2/h)^{2/r}$ together with $\max_K a_K\le \bigl(\sum_K a_K^2\bigr)^{1/2}$, we obtain the global estimate
\begin{equation}\label{eq:global-Linf-r-H1}
\|u_h\|_{L^\infty(\Omega)}
\;\le\; C\,r^{1/2}\left(\tfrac{k^2}{h}\right)^{\tfrac{2}{r}}\left(\sum_{K\in\mathcal T_h}\|u_h\|_{H^1(K)}^2\right)^{\tfrac{1}{2}}
\;=\; C\,r^{1/2}\left(\tfrac{k^2}{h}\right)^{\tfrac{2}{r}}\|u_h\|_{H^1(\Omega)}.
\end{equation}
Now, using $r:=(1+\log(k^2/h))$ we get
\begin{equation}\label{eq:discr-sobolev-unit}
  \begin{split}
  \|u_h\|^2_{L^\infty(\Omega)} \!&\overset{{\eqref{eq:global-Linf-r-H1}}}{\le} \!C\left(1+\log\left(\tfrac{k^2}{h}\right)\right)\left(\tfrac{k^2}{h}\right)^{4/(1+\log(k^2/h))}\|u_h\|^2_{H^1(\Omega)} \\
  &\,\,\,\hspace{0.4mm}{\le} \,\,\hspace{0.5mm}C\left(1+\log\left(\tfrac{k^2}{h}\right)\right)\,\|u_h\|^2_{H^1(\Omega)}
  \end{split}
\end{equation}
as one can show that $\left(\frac{k^2}{h}\right)^{4/(1+\log(k^2/h))}\le e^4.$
\end{proof}

\begin{remark}
  If $\widehat\Omega\subset\mathbb R^2$ is a polygonal domain with
  $\operatorname{diam}(\widehat\Omega)=H>0$ and mesh size $\hat h$,
  let $F:\Omega\to\widehat\Omega$ be an affine map with scaling factor $H$
  and set $\Omega:=H^{-1}\widehat\Omega$, $h:=\hat h/H$.
  For $\hat u_h\in V_h^k(\widehat\Omega)$ define
  $u_h(x) := \hat u_h(F(x))$ on $\Omega$.
  Then there exist $c_1, c_2 > 0$ independent of $H$ such that
  \[
  c_1 \|\hat u_h\|_{H^1(\widehat\Omega)} \le \|u_h\|_{H^1(\Omega)} \le c_2 \|\hat u_h\|_{H^1(\widehat\Omega)},
  \qquad
  \|u_h\|_{L^\infty(\Omega)} = \|\hat u_h\|_{L^\infty(\widehat\Omega)},
  \]
  and the mesh on $\Omega$ has diameter $1$ and size $h=\hat h/H$.
  Applying the unit-diameter estimate \eqref{eq:discr-sobolev-unit} to $u_h$ on $\Omega$
  yields the general form with $\log(Hk^2/\hat h)$.
  \end{remark}

  \subsection{High-order interpolation inequality} 
  To complete the GDSW proof, we need a family of interpolation operators from $V_h^{2k}$ to $V_h^k$, that is uniformly $H^1$-stable independent of $k$, and produces $H^1$-functions on the entire domain. It will be used to interpolate the restriction products $w_i := \theta_i w$ to $V_h^k$. Fortunately, such a family can be constructed using \cite[Theorem 4.4]{eibner_melenk_local_2004}.

    \begin{theorem}[Conforming degree-reduction operators]\label{hointerpolation}
      Let $\Omega \subset \mathbb{R}^2$ be a polygonal domain, and let $\{\mathcal T_h\}_{h>0}$ be a shape-regular family of conforming triangulations or quadrilateral meshes of $\Omega$.
      There exists a family of interpolation operators $I_{k}^h : V_h^{2k} \to V_h^k$ with the following properties.
      
      \begin{itemize}
      \item $I^k_h v_h = v_h$ for all $v_h \in V_h^k.$
      \item For every element $K \in \mathcal T_h$ and every edge $e \subset \partial K$,
      \[
      (I^k_h v_h)|_e = i_{k,e}(v_h|_e),
      \]
      where $i_{k,e}$ denotes the degree $k$ Gauss-Lobatto interpolation onto $\mathbb P_k(e)$.
      \item There exists a constant $C>0$, independent of $h$ and $k$, such that
      \[
      |I^k_h v_h|_{H^1(\Omega)} \le C |v_h|_{H^1(\Omega)}
      \qquad \text{for all } v_h \in V_h^{2k}.
      \]
      \end{itemize}
      
      \end{theorem}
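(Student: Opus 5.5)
The plan is to build $I_h^k$ element by element on the reference element $\hat K$ (triangle or square) and to obtain conformity for free. The second property says the trace on an edge depends only on $v_h|_e$, so the local pieces glue to a continuous, hence $H^1(\Omega)$-conforming, function. Since the $H^1$-seminorm is invariant under affine scaling in 2D, it suffices to prove a $k$-uniform, $h$-independent estimate $|\hat\Pi \hat v|_{H^1(\hat K)}\le C|\hat v|_{H^1(\hat K)}$ on $\hat K$. The constant then depends only on shape regularity, and the global bound follows by summing over $K\in\mathcal T_h$.

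On $\hat K$ I would define, for $\hat v\in\mathbb P_{2k}(\hat K)$ (resp.\ $\mathbb Q_{2k}$),
\begin{equation*}
  \hat\Pi\hat v := \Pi^\nabla_k\hat v + E_k\bigl(i_{k,\partial\hat K}(\hat v|_{\partial\hat K}) - (\Pi^\nabla_k\hat v)|_{\partial\hat K}\bigr).
\end{equation*}
Here $\Pi^\nabla_k$ is the $H^1$-projection onto $\mathbb P_k(\hat K)$ (resp.\ $\mathbb Q_k$), normalized to preserve the mean. The operator $i_{k,\partial\hat K}$ is edgewise Gauss--Lobatto interpolation of degree $k$; it is continuous on $\partial\hat K$ because it preserves vertex values. $E_k$ is a polynomial lifting of degree $k$ from $\partial\hat K$ into $\hat K$ that reproduces polynomial traces and is $H^{1/2}(\partial\hat K)\to H^1(\hat K)$ stable uniformly in $k$ (Babu\v{s}ka--Craig--Mandel--Pitk\"aranta, Mu\~noz-Sola; for squares via tensor-product constructions).

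The first two properties are then immediate. The trace equals $(\Pi^\nabla_k\hat v)|_{\partial\hat K} + i_{k,\partial\hat K}\hat v - (\Pi^\nabla_k\hat v)|_{\partial\hat K} = i_{k,\partial\hat K}\hat v$. If $\hat v\in\mathbb P_k$, then $\Pi^\nabla_k\hat v=\hat v$ and $i_{k,\partial\hat K}\hat v=\hat v|_{\partial\hat K}$, so the lifted term vanishes.

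For stability, set $\hat w:=\hat v-\Pi^\nabla_k\hat v\in\mathbb P_{2k}$. We have $|\Pi^\nabla_k\hat v|_{H^1}\le|\hat v|_{H^1}$ and $|\hat w|_{H^1}\le|\hat v|_{H^1}$. Because $\hat w$ has zero mean, Lemma~\ref{lem:poincare} gives $\|\hat w\|_{H^1(\hat K)}\le C|\hat v|_{H^1(\hat K)}$. Since $i_{k,\partial\hat K}$ reproduces $\mathbb P_k$ traces, the lifted quantity is $i_{k,\partial\hat K}(\hat w|_{\partial\hat K})$. It therefore remains to show the following.
\begin{equation*}
  \|i_{k,\partial\hat K}(\hat w|_{\partial\hat K})\|_{H^{1/2}(\partial\hat K)}\le C\|\hat w\|_{H^1(\hat K)}\qquad\forall\,\hat w\in\mathbb P_{2k}(\hat K).
\end{equation*}

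This trace estimate is the main obstacle. Gauss--Lobatto interpolation is not bounded on $H^{1/2}$ in general, and $H^{1/2}$ does not control point values. It is essential that the input has degree at most $2k$, a fixed multiple of $k$.

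My approach follows Eibner--Melenk \cite[Theorem 4.4]{eibner_melenk_local_2004}. I would use the 1D results (Bernardi--Maday) that on an edge $e$, $i_{k,e}$ is stable in $H^1(e)$ and, up to a constant independent of $k$, in $L^2(e)$ for polynomials of degree $\le 2k$. I would first subtract a suitable vertex-preserving correction so that the edgewise pieces vanish at the vertices. I would then interpolate between the $L^2$ and $H^1_0$ bounds to get $H^{1/2}_{00}(e)$ stability edge by edge. These edge pieces would be assembled on $\partial\hat K$, and the bound closed with the trace theorem on $\hat K$.

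The delicate point is the vertex correction. It must be handled without an $L^\infty$ bound, which would cost a $\log k$ factor. I would try to absorb it into the lifting $E_k$, as Eibner--Melenk do, rather than estimate it separately.
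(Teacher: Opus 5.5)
The paper's proof is only a citation of \cite[Theorem 4.4]{eibner_melenk_local_2004} with $p=2k$, $p'=k$. You instead unpack the construction, and your reduction is correct. The elementwise definition $\hat\Pi\hat v=\Pi^\nabla_k\hat v+E_k(\cdots)$ gives the projection property and the Gauss--Lobatto traces immediately. Via $|\Pi^\nabla_k\hat v|_{H^1}\le|\hat v|_{H^1}$, Poincar\'e for the mean-free $\hat w$, and a $k$-uniform polynomial lifting, $H^1$-stability reduces to the single estimate $\|i_{k,\partial\hat K}(\hat w|_{\partial\hat K})\|_{H^{1/2}(\partial\hat K)}\le C\|\hat w\|_{H^1(\hat K)}$ for $\hat w\in\mathbb P_{2k}(\hat K)$. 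Your version also makes the operator element-local, with $|I^k_h v_h|_{H^1(K)}\le C|v_h|_{H^1(K)}$ on every $K$. The statement does not record this, but the paper uses it later: stability on $\Omega_i$ and $\Omega_i'$, and $I^k_h(\theta_i w)$ supported in $\Omega_i'$.

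The gap is that this trace estimate is left unproved, and the route you sketch does not give it. The estimate is in fact necessary up to constants: apply the trace theorem to $\hat\Pi\hat w$ minus its mean. So it cannot be ``absorbed into $E_k$''. Your sketch splits $g:=\hat w|_{\partial\hat K}$ into a vertex correction plus a part vanishing at the vertices. With the natural edgewise linear correction, both pieces are bounded by $\|g\|_{H^{1/2}(\partial\hat K)}$ only up to powers of $1+\log k$. The reason is that vertex values of degree-$2k$ polynomials are controlled only by $C(1+\log k)^{1/2}\|\hat w\|_{H^1(\hat K)}$. In the end your proposal therefore rests on the same citation as the paper.

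The missing idea is to interpolate $I-i_{k,e}$ instead of $i_{k,e}$. Because the endpoints of $e$ are Gauss--Lobatto nodes, $(I-i_{k,e})u$ vanishes there automatically. Hence $I-i_{k,e}$ is bounded $H^1(e)\to H^1_0(e)$, by the $H^1$-stability you quote. It is also bounded $L^2(e)\to L^2(e)$ on $\mathbb P_{2k}(e)$, by the $L^2$-bound you quote (a Marcinkiewicz--Zygmund inequality for the Lobatto quadrature). Interpolating, together with Maday's uniform-in-degree equivalence of interpolation norms on polynomial spaces (which your plan needs anyway), gives $\|(I-i_{k,e})u\|_{H^{1/2}_{00}(e)}\le C\|u\|_{H^{1/2}(e)}$ for $u\in\mathbb P_{2k}(e)$. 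Since $g-i_{k,\partial\hat K}g$ vanishes at the vertices, its $H^{1/2}(\partial\hat K)$-norm is bounded by the sum of its edgewise $H^{1/2}_{00}$-norms. Hence
\[
\|i_{k,\partial\hat K}g\|_{H^{1/2}(\partial\hat K)}\le\|g\|_{H^{1/2}(\partial\hat K)}+\|g-i_{k,\partial\hat K}g\|_{H^{1/2}(\partial\hat K)}\le C\|g\|_{H^{1/2}(\partial\hat K)}\le C\|\hat w\|_{H^1(\hat K)}.
\]
No vertex value is ever estimated. With this step, your construction becomes a self-contained proof of the statement, resting only on these cited polynomial-approximation tools rather than on Eibner--Melenk.
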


      \begin{proof}
          The family and its properties can be constructed as a special case of \cite[Theorem 4.4]{eibner_melenk_local_2004} by choosing $p = 2k$ and $p' = k$ and bounding the $p$-dependent constants by 3. Further, we are using conforming finite element spaces, therefore the interpolated functions are continuous and therefore in $H^1$.
      \end{proof}

      \begin{remark}\label{sametrace}
        We note, that for a fixed $k$ and possibly distinct functions $u_h\in V^{k+1}_h$ and $v_h\in V^{2k}_h$ that have the same trace, so $u_h|_{\partial\Omega} = v_h|_{\partial\Omega}$, the respectively interpolated functions $I_{k}^h u_h, I_{k}^h v_h \in V^k_h$ have the same trace as well, by construction.
      \end{remark}
      As stated earlier, this will play the role of \cite[Lemma 4.31]{toselli_widlund_ddm_2005} in the edge lemma proof, and in the GDSW proof.

\subsection{Edge lemma}
The edge lemma is a key ingredient in the proof of the condition number bound for GDSW and RGDSW. It provides bounds on the energy of the discrete harmonic extension of the product of an edge function and an arbitrary function, as well as bounds on the energy and $L^2$-norm of the edge function itself. For $k=1$ it was shown in \cite[Lemma 4.4]{klawonn_analysis_2008} for John domains, and here we extend it to arbitrary polynomial degrees. We first clarify the construction of high-order vertex and edge functions.
\begin{definition}[High-order vertex/edge function]\label{def:vertex-edge-fn}
  Let $\Omega_i$ be a John domain and $\Gamma \subset \partial \Omega_i$ be either a vertex or an open edge. We say that $\theta_{\Gamma} \in V^{k}_h(\Omega_i)$ is a high-order vertex or edge function respectively, if $\theta_{\Gamma}$ is a finite element function which equals $1$ at all DOFs of $\Gamma$, vanishes at the other nodes on $\partial \Omega_i$, is interpolated piecewise linearly on all the boundary degrees of freedom and is discrete harmonic in $\Omega_i$.
\end{definition}
Since the decomposition into vertex and edge functions needs to be a partition of unity, this construction of vertex and edge functions with piecewise linear interpolation on the boundary values is sufficient for our case and allows us to use large parts of the proof of the edge lemma for $k=1$ to prove the high-order case.
\begin{definition}[Discrete harmonic extension]\label{def:discrete-harmonic-extension}
  Let $\Omega_i$ be a subdomain and let $g_h$ be the trace of a finite element function in $V_h^k(\Omega_i)$ on $\partial\Omega_i$. The discrete harmonic extension of $g_h$ into $\Omega_i$ is the function $\mathcal{H}_i(g_h) \in V_h^k(\Omega_i)$ satisfying
  \begin{equation*}
    \mathcal{H}_i(g_h)|_{\partial\Omega_i}=g_h,
    \qquad
    a_i(\mathcal{H}_i(g_h),v_h)=0 \quad \forall v_h\in V_{h,0}^k(\Omega_i),
  \end{equation*}
  where $V_{h,0}^k(\Omega_i):=\{v_h\in V_h^k(\Omega_i): v_h=0 \text{ on } \partial\Omega_i\}$ and $a_i(w,v):=\int_{\Omega_i}\nabla w\cdot\nabla v\,\dd x$. Equivalently, $\mathcal{H}_i(g_h)$ is the energy-minimizing extension of $g_h$ in $V_h^k(\Omega_i)$.
\end{definition}

\begin{theorem}[High-order edge lemma]\label{thm:edgelemma}
  Let $\Omega_i$ be a John domain, $\mathcal{E}_{ij} \subset \partial \Omega_i$ be an edge, and 
$\theta_{\mathcal{E}_{ij}} \in V^{k}_h(\Omega_i)$ be a high-order edge function. Then we have the following bounds:
\begin{align*}
  |\mathcal{H}_i(I^k_h(\theta_{\mathcal{E}_{ij}}u))|^2_{H^1(\Omega_i)} 
  &\le C \, \left(1 + \log\left(\tfrac{H_ik^2}{h_i}\right)\right)^2 \, \|u\|^2_{H^1(\Omega_i)}, \\
  |\theta_{\mathcal{E}_{ij}}|^2_{H^{1}(\Omega_i)} &\le C \left(1 + \log\left(\tfrac{H_i}{h_i}\right)\right), \\
  \|\theta_{\mathcal{E}_{ij}}\|^2_{L^{2}(\Omega_i)} &\le C H_i^2 \left(1 + \log\left(\tfrac{H_i}{h_i}\right)\right).
\end{align*}

\end{theorem}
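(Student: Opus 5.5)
I would follow the structure of the $k=1$ proof in \cite[Lemma 4.4]{klawonn_analysis_2008}, and treat the two bounds on $\theta_{\mathcal{E}_{ij}}$ first, since they feed into the first one.

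\textbf{Step 1: the bounds on $\theta_{\mathcal{E}_{ij}}$.} By \Cref{def:vertex-edge-fn}, the boundary values of $\theta_{\mathcal{E}_{ij}}$ are interpolated piecewise linearly at the boundary nodes. Hence its trace coincides with the trace of a function that is $1$ on the open edge and decays linearly to $0$ within the first element at each endpoint. Because $\theta_{\mathcal{E}_{ij}}$ is discrete harmonic, its energy is bounded by the energy of any $V_h^k$-extension of this trace. I would use a \emph{piecewise linear} extension: the standard $k=1$ cutoff built from the distance to the endpoints, with logarithmic decay in polar coordinates around each endpoint. This extension lies in $V_h^1\subset V_h^k$. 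Its energy is $C(1+\log(H_i/h_i))$, which gives the second bound without any $k$-dependence. Its $L^2$-norm is also $CH_i^2(1+\log(H_i/h_i))$. For the third bound, I would apply \Cref{lem:poincare} to $\theta_{\mathcal{E}_{ij}}$ minus its trace-generating extension $\vartheta$; this difference vanishes on $\partial\Omega_i$. Then
\[
\|\theta_{\mathcal{E}_{ij}}\|^2_{L^2}\le 2\|\vartheta\|^2_{L^2}+2C H_i^2\bigl(|\theta_{\mathcal{E}_{ij}}|^2_{H^1}+|\vartheta|^2_{H^1}\bigr),
\]
which yields the claim. (For John domains I would reuse the cutoff construction from \cite{klawonn_analysis_2008} verbatim, since it is piecewise linear on the mesh.)

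\textbf{Step 2: the product bound.} Here $u\in V_h^k(\Omega_i)$, so $\theta_{\mathcal{E}_{ij}}u\in V_h^{2k}$. By \Cref{hointerpolation} and \Cref{sametrace}, the trace of $I_h^k(\theta_{\mathcal{E}_{ij}}u)$ depends only on the trace of $\theta_{\mathcal{E}_{ij}}u$. The discrete harmonic extension minimizes energy, so
\[
|\mathcal H_i(I_h^k(\theta_{\mathcal{E}_{ij}}u))|_{H^1}\le |I_h^k(\vartheta u)|_{H^1}\le C|\vartheta u|_{H^1},
\]
using the $k$-independent stability in \Cref{hointerpolation}; here $\vartheta\in V_h^1$ is the cutoff from Step 1, so $\vartheta u\in V_h^{k+1}\subset V_h^{2k}$. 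It remains to bound $|\vartheta u|^2_{H^1}\le 2\|\nabla\vartheta\, u\|^2_{L^2}+2\|\vartheta\nabla u\|^2_{L^2}$. The second term is at most $|u|^2_{H^1}$. For the first, I would use $\|u\|_{L^\infty}$ near the endpoints, where $\nabla\vartheta$ is large. More precisely, I would split $\Omega_i$ into the region within $h$ of the endpoints (handled by $\|u\|^2_{L^\infty}$) and dyadic annuli. On these the $k=1$ argument uses $\|\nabla\vartheta\|^2_{L^2}\le C(1+\log(H/h))$ combined with \Cref{thm:discrete-sobolev}, which gives
\[
\|u\|^2_{L^\infty}\le C\bigl(1+\log(H_ik^2/h_i)\bigr)\|u\|^2_{H^1}.
\]
The product $C(1+\log(H/h))(1+\log(Hk^2/h))\le C(1+\log(Hk^2/h))^2$ yields the stated bound. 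If needed, I would first subtract the mean of $u$ and use \Cref{lem:poincare}, as in the scaled-norm convention.

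\textbf{Main obstacle.} The delicate point is Step 2. One must make sure that interpolation with $I_h^k$ and replacing $\theta_{\mathcal{E}_{ij}}$ by $\vartheta$ are compatible, which needs equal traces of $\theta_{\mathcal{E}_{ij}}u$ and $\vartheta u$ on all of $\partial\Omega_i$. One must also check that $k$ enters only through the discrete Sobolev inequality, and never through an inverse estimate on $\nabla\vartheta$. Because $\vartheta$ is piecewise linear, its gradient estimates are purely $h$-dependent, so I expect this to work.
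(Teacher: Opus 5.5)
Your proposal is correct and follows essentially the same route as the paper: a piecewise linear cutoff $\vartheta_{\mathcal{E}_{ij}}$ from \cite{klawonn_analysis_2008} with the same trace as $\theta_{\mathcal{E}_{ij}}$, energy minimality over $V_h^1\subset V_h^k$ for the seminorm bound, and Poincar\'e's inequality applied to $\theta_{\mathcal{E}_{ij}}-\vartheta_{\mathcal{E}_{ij}}$ for the $L^2$ bound; for the product bound it likewise uses equal traces under $I_h^k$, its $k$-uniform stability, the product rule, and \Cref{thm:discrete-sobolev} combined with $|\vartheta_{\mathcal{E}_{ij}}|^2_{H^1(\Omega_i)}\le C(1+\log(H_i/h_i))$. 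Three minor remarks: the dyadic-annuli splitting is unnecessary, since $\|u\|_{L^\infty(\Omega_i)}|\vartheta_{\mathcal{E}_{ij}}|_{H^1(\Omega_i)}$ already suffices; a purely radial logarithmic profile around the endpoints could not reproduce the required trace, whereas the standard cutoff varies in the angle with $|\nabla\vartheta_{\mathcal{E}_{ij}}|\le C/r$, $r$ being the distance to the nearest endpoint; and the paper additionally covers $\mathbb{Q}_k$ by comparing a piecewise bilinear cutoff with its triangulated piecewise linear counterpart.
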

\begin{proof}
  We proceed similarly to \cite[Lemma 4.4]{klawonn_analysis_2008} and show the case of triangular meshes first. The construction of the piecewise $\mathbb{P}_1$-function $\vartheta_{\mathcal{E}_{ij}}$ is independent of the considered ansatz space, therefore also independent of $k$ and can be done the same way as in the cited work. We summarize the construction briefly and refer to \cite[Lemma 4.4]{klawonn_analysis_2008} for details. 
  Using that $\Omega$ is a John domain, a finite series of rectangles $R_{l,l+1}$ is constructed which covers the John curves and is a subset of $\Omega$. It furthermore splits $\Omega$ into three parts. On the part connected to $\mathcal{E}_{ij}$, the function $\vartheta_{\mathcal{E}_{ij}}$ is set to one, the part covered by the rectangles is triangulated and the function decreases piecewise linearly from 1 to 0 on the triangulation, and on the remaining part the function is set to zero.  For an arbitrary degree $k$, we pick the corresponding operator $I^k_h$ of the family of interpolation operators in \cref{hointerpolation}, and consider the functions $\vartheta_{\mathcal{E}_{ij}}u_h \in V^{k+1}_h$ and $\theta_{\mathcal{E}_{ij}}u_h \in V^{2k}_h$. Since these functions have the same trace, the interpolated functions $I^k_h(\theta_{\mathcal{E}_{ij}}u_h) \in V^{k}_h, I^k_h(\vartheta_{\mathcal{E}_{ij}}u_h) \in V^{k}_h$ have the same trace as well (see \cref{sametrace}). Therefore, by energy minimality of the discrete harmonic extension in the space $V^k_h$, we have that  
  \begin{equation*}
    |\mathcal{H}_i(I^k_h(\theta_{\mathcal{E}_{ij}}u))|_{H^1(\Omega_i)} \le |I^k_h(\vartheta_{\mathcal{E}_{ij}}u)|_{H^1(\Omega_i)}
  \end{equation*}
  Further, by the uniform $H^1$-stability of the $I^k_h$ operator
  \begin{align*}
    |I^k_h(\vartheta_{\mathcal{E}_{ij}}u)|_{H^1(\Omega_i)}
    &\le C |\vartheta_{\mathcal{E}_{ij}}u|_{H^1(\Omega_i)} && \hspace{2.45cm}\makebox[0pt][r]{$[\text{\cref{hointerpolation}}]$}\\
    &\le C (\|u\nabla\vartheta_{\mathcal{E}_{ij}}\|_{L^2(\Omega_i)} + \|\vartheta_{\mathcal{E}_{ij}}\nabla u\|_{L^2(\Omega_i)}) && \hspace{2.45cm}\makebox[0pt][r]{$[\text{product rule, $\triangle$-ineq.}]$}\\
    &\le C (\| u \|_{L^\infty(\Omega_i)} |\vartheta_{\mathcal{E}_{ij}}|_{H^1(\Omega_i)} +\| \vartheta_{\mathcal{E}_{ij}} \|_{L^\infty(\Omega_i)} |u|_{H^1(\Omega_i)}) &&\\
    &\le C(\| u \|_{L^\infty(\Omega_i)} |\vartheta_{\mathcal{E}_{ij}}|_{H^1(\Omega_i)} + |u|_{H^1(\Omega_i)}) && \hspace{2.45cm}\makebox[0pt][r]{$[\|\vartheta_{\mathcal{E}_{ij}}\|_{L^\infty(\Omega_i)}\le 1]$}\\
    &\le  C\left(\left(1+\log \left(\tfrac{H_i k^2}{h_i}\right)\right)^{1/2}|\vartheta_{\mathcal{E}_{ij}}|_{H^1(\Omega_i)} + 1\right) \,\|u\|_{H^1(\Omega_i)} && \\
    &\le C\left(1+\log \left(\tfrac{H_ik^2}{h_i}\right)\right) \,\|u\|_{H^1(\Omega_i)}. &&
  \end{align*}
  Here we used \cref{thm:discrete-sobolev}, the triangle inequality, and the fact that $0\le \vartheta_{\mathcal{E}_{ij}}\le 1$ by a maximum principle for discrete harmonic piecewise linear functions. The last inequality follows from the fact that $\vartheta_{\mathcal{E}_{ij}}$ is a piecewise linear function and therefore $|\vartheta_{\mathcal{E}_{ij}}|^2_{H^1(\Omega_i)}\le C\Bigl(1+\log \left(\frac{H_i}{h_i}\right)\Bigr)$ holds from the proof of the edge lemma in \cite[Lemma 4.4]{klawonn_analysis_2008}.

  For the bound on the $H^1$-seminorm, let $\theta^{[k]}_{\mathcal{E}_{ij}}$ denote the discrete harmonic extension in $V^k_h$ of the piecewise linearly interpolated boundary values of $\theta_{\mathcal{E}_{ij}}|_{\partial\Omega_i}$ in the element edge degrees of freedom. Since the boundary values are the same, and $V^1_h\subset V^k_h$, we can estimate $|\theta^{[k]}_{\mathcal{E}_{ij}}|_{H^1(\Omega_i)}\le |\theta^{[1]}_{\mathcal{E}_{ij}}|_{H^1(\Omega_i)}$ due to energy minimality, and perform the rest of the estimate as in the $k=1$ case.
  
  For the $L^2(\Omega_i)$-bound, we use Poincaré's inequality and the above inequality. Since $\theta_{\mathcal{E}_{ij}}-\vartheta_{\mathcal{E}_{ij}}$ is zero on $\partial \Omega_i$, we have that 
  \begin{align*}
    \|\theta_{\mathcal{E}_{ij}}\|_{L^2(\Omega_i)} &\le \|\vartheta_{\mathcal{E}_{ij}}\|_{L^2(\Omega_i)} + \|\theta_{\mathcal{E}_{ij}}-\vartheta_{\mathcal{E}_{ij}}\|_{L^2(\Omega_i)} \\
    &\le |\Omega_i|^{1/2} + C_{\text{PF}}|\Omega_i|^{1/2}|\theta_{\mathcal{E}_{ij}}-\vartheta_{\mathcal{E}_{ij}}|_{H^1(\Omega_i)} \\
    &\le C H_i (1+ 2C_{\text{PF}}|\vartheta_{\mathcal{E}_{ij}}|_{H^1(\Omega_i)}) \\
    &\le C H_i \left(1 + \log\left(\tfrac{H_i}{h_i}\right)\right)^{1/2}
    \end{align*}
    where $C_{\text{PF}}$ is the Poincaré constant. Squaring both sides yields the desired result.

  If $V^k_h$ is a $\mathbb{Q}_k$-space, let $\mathcal{Q}_h$ be the quadrilateral mesh inside the union of the rectangles and $\vartheta^{{\mathcal{Q}_h}}_{\mathcal{E}_{ij}}$ be piecewise bilinear on ${\mathcal{Q}_h}$. Let ${\mathcal{T}_h}$ be the mesh constructed by subdividing each quadrilateral $Q\in {\mathcal{Q}_h}$ into two triangles $T_1$ and $T_2$, and $\vartheta^{{\mathcal{T}_h}}_{\mathcal{E}_{ij}}$ the corresponding function on ${\mathcal{T}_h}$. Since $\vartheta^{{\mathcal{Q}_h}}_{\mathcal{E}_{ij}}$ is harmonic on each element $Q\in {\mathcal{Q}_h}$, we have that $|\vartheta^{{\mathcal{Q}_h}}_{\mathcal{E}_{ij}}|_{H^1(Q)}\le |\vartheta^{{\mathcal{T}_h}}_{\mathcal{E}_{ij}}|_{H^1(T_1\cup T_2)}$. The rest of the proof can be carried out similarly to the $\mathbb{P}_k$-case.
\end{proof}

\begin{remark}\label{rem:vertex-fn}
  For high-order vertex functions $\theta_{\mathcal{V}_l}$, similar bounds can be shown by the same arguments, using the fact $|\theta^{[k]}_{\mathcal{V}_l}|_{H^1(\Omega_i)}\le |\theta^{[1]}_{\mathcal{V}_l}|_{H^1(\Omega_i)}$ and a construction of a piecewise linear function $\vartheta_{\mathcal{V}_l}$ that equals one at the vertex $\mathcal{V}_l$ and zero at all other vertices and edges of $\Omega_i$.
\end{remark}

    \subsection{Proof of the condition number bound}
We now give a proof of the high-order GDSW and RGDSW condition number bound, giving special attention to the differences that result from the use of high-order finite elements. 
We first recall that we denote by $\Omega_i$ the nonoverlapping subdomains and by $\Omega'_i$ the overlapping ones, where each one has an overlap of size $\delta_i$. We denote the overlapping strip by $\Omega_{i,\delta_i}:= \Omega'_i\setminus\Omega_i$ and the mesh size on $\Omega_i$ by $h_i$. We denote the diameter of $\Omega_i$ by $H_i$ and $H:=\max_{1\le i\le N} H_i$. We assume that the subdomains are shape-regular and that the mesh on each subdomain is shape-regular and quasi-uniform.

We apply the abstract two-level overlapping Schwarz theory \cite[Chapter~2]{toselli_widlund_ddm_2005}. Assuming exact local solvers, we define the local and coarse bilinear forms by
\[
  \tilde{a}_i(u_i,v_i) := a(R_i^{T}u_i,R_i^{T}v_i), \quad u_i,v_i \in V_i, \quad i=0,1,\dots,N,
\]
and we set $R_0 := \Phi^T$, where $\Phi$ is the coarse basis matrix introduced in Section~\ref{sec:GDSW}, so that $V_0 = \Phi^T V$.
By the abstract theory, it suffices to construct a stable decomposition: For every $u\in V$, find $u_i\in V_i$, $i=0,\dots,N$, such that
\[
  u = \sum_{i=0}^N R_i^{T} u_i \qquad\text{and}\qquad \sum_{i=0}^N \tilde{a}_i(u_i,u_i) \le C_0^{\,2}\, a(u,u)
\]
for some constant $C_0>0$. The constant $C_0^2$ then bounds the condition number from above, while the lower bound $a(u,u)\le \sum_{i=0}^N\tilde{a}_i(u_i,u_i)$ follows from the partition-of-unity property. Let $\theta_i$ for $i=1,\dots,N$ be the partition-of-unity functions associated with the overlapping decomposition, and let $I^k_h$ be the interpolation operator from \cref{hointerpolation}. 
We define {$u_i := R_i(I^k_h(\theta_i (u-u_0)))\in V_i$ and the coarse interpolation $I_{\mathrm{GDSW}}^H: V \rightarrow V_0$} by 
\begin{equation*}
u_0 := I_{\mathrm{GDSW}}^H u := \sum_l u(\mathcal{V}_l) \theta_{\mathcal{V}_l} + \sum_{jm} 
\bar{u}_{\mathcal{E}_{jm}}\,\theta_{\mathcal{E}_{jm}},
\end{equation*} where $\theta_{\mathcal{V}_l}$, $\theta_{\mathcal{E}_{jm}}$ are the vertex and edge basis functions of Definition~\ref{def:vertex-edge-fn} and $\bar{u}_{\mathcal{E}_{jm}}$ is the mean value of $u$ on the edge $\mathcal{E}_{jm}$.

Since $R_0 = \Phi^T$, the operator $I_{\mathrm{GDSW}}^H$ coincides with the coarse space restriction $R_0$ in the abstract Schwarz framework. Moreover, $I_{\mathrm{GDSW}}^H u_0 = u_0$, since $u_0$ is a linear combination of the high-order vertex and edge basis functions, which satisfy the interpolation conditions of \Cref{def:vertex-edge-fn} by construction.
We take care of the coarse part first. We note that, by an abuse of notation, $u_0$ alternates between coarse functions in $V_0$ and their representation as functions in $V$. We have that
\begin{equation*}
  \tilde{a}_0(u_0,u_0) = a(I_{\mathrm{GDSW}}^H u_0,I_{\mathrm{GDSW}}^H u_0) = a(u_0,u_0) = |u_0|_{H^{1}(\Omega)}^{2} = \sum_{i=1}^{N} |u_0|_{H^{1}(\Omega_i)}^{2}.
\end{equation*}
On each subdomain, we estimate
\begin{equation}\label{eq:u0estimate}
\begin{split}
  |u_0|_{H^{1}(\Omega_i)}^{2} &= |u_0- \bar{u}_{\Omega_i}|_{H^{1}(\Omega_i)}^{2} \\
  &= \left\lvert \sum_{l} (u(\mathcal{V}_{l})-\bar{u}_{\Omega_i})\,\theta_{\mathcal{V}_{l}} + \sum_{jm} (\bar{u}_{\mathcal{E}_{jm}}-\bar{u}_{\Omega_i})\,\theta_{\mathcal{E}_{jm}} \right\rvert_{H^{1}(\Omega_i)}^{2} \hspace{1.65cm} [\text{def.\ }I_{\mathrm{GDSW}}^H]\\
  &\le \sum_l ||u(\mathcal{V}_{l})-\bar{u}_{\Omega_i}||^2_{L^\infty(\Omega_i)} |\theta_{\mathcal{V}_{l}}|^2_{H^{1}(\Omega_i)} + \sum_{jm} ||\bar{u}_{\mathcal{E}_{jm}}-\bar{u}_{\Omega_i}||^2_{L^\infty(\Omega_i)} |\theta_{\mathcal{E}_{jm}}|^2_{H^{1}(\Omega_i)} \\
  &\le  ||u-\bar{u}_{\Omega_i}||^2_{L^\infty(\Omega_i)} \left(C \sum_l |\theta_{\mathcal{V}_l}|_{H^{1}(\Omega_i)}^{2} + C \sum_{jm} |\theta_{\mathcal{E}_{jm}}|_{H^{1}(\Omega_i)}^{2}\right) \\
  &\le C \left(1+\log\left(\tfrac{H_i}{h_i}\right)\right)\left(1+\log\left(\tfrac{H_ik^2}{h_i}\right)\right)|u|^2_{H^1(\Omega_i)}. \!\qquad[\text{\cref{thm:discrete-sobolev},\ \text{\cref{thm:edgelemma}}}]
  \raisetag{-4.9\baselineskip}
\end{split}
\end{equation}
Now, for $w:= u-u_0$, it follows that
\begin{equation}\label{wH1}
  |w|_{H^{1}(\Omega_i)}^{2} = |u-u_0|_{H^{1}(\Omega_i)}^{2} \le C \left(1+\log\left(\tfrac{H_i}{h_i}\right)\right)\left(1+\log\left(\tfrac{H_ik^2}{h_i}\right)\right)|u|^2_{H^1(\Omega_i)}.
\end{equation}
For a constant $u_c$ one can show that $I_{\mathrm{GDSW}}^H u_c = u_c$. Let therefore $\bar{u}_{\Omega_i} = \frac{1}{|\Omega_i|}\int_{\Omega_i} u \dd x$ be the mean value of $u$ on $\Omega_i$ and define $\hat{u} = u - \bar{u}_{\Omega_i}$. Using similar arguments as above we find that
\begin{equation}\label{eq:IHuhat-estimate}
\begin{split}
  \|I_{\mathrm{GDSW}}^H \hat{u}&\|^2_{L^2(\Omega_i)} \!= \left\lvert\left\lvert \sum_{l} (u(\mathcal{V}_{l})-\bar{u}_{\Omega_i})\,\theta_{\mathcal{V}_{l}} + \sum_{jm} (\bar{u}_{\mathcal{E}_{jm}}-\bar{u}_{\Omega_i})\,\theta_{\mathcal{E}_{jm}} \right\rvert\right\rvert_{L^{2}(\Omega_i)}^{2} \\
  &\le \sum_l \|u(\mathcal{V}_{l})-\bar{u}_{\Omega_i}\|^2_{L^\infty(\Omega_i)} \|\theta_{\mathcal{V}_{l}}\|_{L^{2}(\Omega_i)}^2 + \sum_{jm} \|\bar{u}_{\mathcal{E}_{jm}}-\bar{u}_{\Omega_i}\|^2_{L^\infty(\Omega_i)} \|\theta_{\mathcal{E}_{jm}}\|_{L^{2}(\Omega_i)}^2 \\
  &\le  \|u-\bar{u}_{\Omega_i}\|^2_{L^\infty(\Omega_i)} \left(C \sum_l \|\theta_{\mathcal{V}_l}\|_{L^{2}(\Omega_i)}^{2} + C \sum_{jm} \|\theta_{\mathcal{E}_{jm}}\|_{L^{2}(\Omega_i)}^{2}\right)\\
  &\le C H_i^2\left(1+\log\left(\tfrac{H_i}{h_i}\right)\right)\left(1+\log\left(\tfrac{H_ik^2}{h_i}\right)\right)|u|^2_{H^{1}(\Omega_i)}.
\raisetag{-4.8\baselineskip}
\end{split}
\end{equation}
For the local contributions we require the following estimate from the literature.
\begin{lemma}\label{lem:overlap-strip}\cite[Lemma 3.10]{toselli_widlund_ddm_2005}
  There exists a constant $C$ such that
 \begin{equation*}
  \| u \|^2_{L^2(\Omega_{i,\delta_i})} \le C \delta_i^2\left(\left(1+\tfrac{H_i}{\delta_i}\right)|u|^2_{H^1(\Omega'_i)} + \tfrac{1}{H_i\delta_i}\|u\|^2_{L^2(\Omega'_i)}\right), \qquad u\in H^1(\Omega'_i)
 \end{equation*}
\end{lemma}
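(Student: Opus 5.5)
This is Lemma 3.10 of Toselli--Widlund, so the plan is to reproduce its standard proof: a local Poincaré/trace-type argument on thin strips adjacent to $\partial\Omega_i$, followed by a covering argument. Nothing in the estimate depends on the discretization, because it holds for all $u\in H^1(\Omega'_i)$. The polynomial degree $k$ therefore plays no role, and the argument is purely continuous.

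First, I would reduce to a reference configuration by scaling with $H_i$. After this rescaling $\Omega'_i$ has unit diameter and the overlap width becomes $\delta_i/H_i$. Every term in the inequality scales consistently under this map: $\|u\|^2_{L^2}$ picks up $H_i^2$, the seminorm is invariant, and $\delta_i^2/(H_i\delta_i)=\delta_i/H_i$. So it suffices to prove the case $H_i=1$. The strip $\Omega_{i,\delta_i}$ is then a neighbourhood of width $\delta:=\delta_i$ of $\partial\Omega_i$ inside $\Omega'_i$. Using shape regularity of the subdomains, in the Lipschitz sense with uniform constants, I would cover this boundary layer by $O(1/\delta)$ patches $S_j$ of diameter $\sim\delta$. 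Each $S_j$ is a strip piece of size $\delta\times\delta$, and it is attached to a slightly larger patch $\widetilde S_j$ of size $\delta\times 2\delta$ lying in $\Omega'_i$, with bounded overlap of the $\widetilde S_j$.

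Next comes the key local estimate. On each $\widetilde S_j$ (diameter $\sim\delta$, uniformly shape-regular), Poincaré's inequality (Lemma~\ref{lem:poincare}) gives
\[
\|u\|^2_{L^2(S_j)} \le 2\|u-\bar u_{\widetilde S_j}\|^2_{L^2(\widetilde S_j)} + 2|\widetilde S_j|\,\bar u_{\widetilde S_j}^2 \le C\delta^2|u|^2_{H^1(\widetilde S_j)} + C\delta^2 \bar u_{\widetilde S_j}^2 .
\]
This turns the problem into controlling the local means. The cleaner route, which I would actually follow, is the one in Toselli--Widlund. Write the boundary layer of width $\eta$ in local coordinates (distance $t$ to $\partial\Omega_i$, arclength $s$). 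For a.e.\ $s$, the one-dimensional fundamental theorem of calculus on a normal segment of length $\sim1$ gives
\[
|u(s,t)|^2\le C\Bigl(\int_0^1 |\partial_t u|^2\,dt' + \int_0^1 |u|^2\,dt'\Bigr).
\]
Integrating this over $t\in(0,\delta)$ and over $s$ yields
\[
\|u\|^2_{L^2(\Omega_{i,\delta})}\le C\delta\bigl(|u|^2_{H^1(\Omega'_i)}+\|u\|^2_{L^2(\Omega'_i)}\bigr).
\]
This is exactly the claimed bound with $H_i=1$, since $\delta^2(1+1/\delta)|u|^2\ge\delta|u|^2$ and $\delta^2\cdot\frac1\delta\|u\|^2=\delta\|u\|^2$. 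By density, it suffices to do this for smooth $u$.

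The main obstacle is justifying the local coordinate system (the normal segments of fixed length $\sim H_i$ inside $\Omega'_i$) for general subdomains. For Lipschitz subdomains with uniform constants, I would do this with a finite atlas of bi-Lipschitz charts flattening $\partial\Omega_i$, whose number and constants depend only on shape regularity. The Jacobians then only affect $C$. After that, scaling back with $H_i$ yields the stated inequality.
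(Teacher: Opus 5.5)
Your proposal is correct and is essentially the standard proof of the cited \cite[Lemma 3.10]{toselli_widlund_ddm_2005}, which the paper only cites and does not prove: rescale to $H_i=1$, bound $u$ on each curve or segment at distance $t\le\delta_i$ from the boundary by a trace-type inequality that is uniform in $t$, integrate over the width to get $C\delta_i\|u\|^2_{H^1(\Omega'_i)}$, and scale back. Only minor tidying is needed: you can drop the Poincar\'e-on-patches detour, and the case $\delta_i\ge cH_i$ is trivial. Also, since $\Omega_{i,\delta_i}=\Omega'_i\setminus\Omega_i$ lies outside $\Omega_i$, your transversal segments (vertical segments in Lipschitz graph charts) must run from the strip point through $\partial\Omega_i$ into $\Omega_i$. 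As written, with $\int_0^1\cdots\,dt'$ and $t=\operatorname{dist}(\cdot,\partial\Omega_i)$, they point outward and leave $\Omega'_i$; alternatively, measure $t$ inward from $\partial\Omega'_i$.
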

Using this lemma, we can estimate the local contributions as follows:
\begin{align}
  \tilde{a}_i(u_i,u_i) &= |I^k_h(\theta_i w)|^2_{H^1(\Omega'_i)} && \notag\\
  &\le C | \theta_i w |^2_{H^1(\Omega'_i)} && \hspace{2.5cm}\makebox[0pt][r]{$\qquad[\text{\cref{hointerpolation}}]$}\notag\\
  &\le C \|\theta_i \nabla w\|_{L^2(\Omega'_i)}^2 + C \|w \nabla \theta_i \|_{L^2(\Omega'_i)}^2 && \notag\\
  &\le C\|\nabla w\|_{L^2(\Omega'_i)}^2  + \tfrac{C}{\delta_i^2} \|w \|_{L^2(\Omega_{i,\delta_i})}^2 && \hspace{2.5cm}\makebox[0pt][r]{$\qquad[\theta_i\le 1,\;|\nabla\theta_i|\le C/\delta_i]$} \label{localContr}\\
  &\le C|w|_{H^1(\Omega'_i)}^2 + C\left(\left(1+\tfrac{H_i}{\delta_i}\right)|w|^2_{H^1(\Omega'_i)} + \tfrac{1}{H_i\delta_i}\|w\|^2_{L^2(\Omega'_i)}\right) && \hspace{2.5cm}\makebox[0pt][r]{$\qquad[\text{\cref{lem:overlap-strip}}]$}\notag\\
  &\le C\left(\left(1+\tfrac{H_i}{\delta_i}\right)|w|^2_{H^1(\Omega'_i)} + \tfrac{1}{H_i\delta_i}\|w\|^2_{L^2(\Omega'_i)}\right) \notag
\end{align}
Here, we used that $0\le \theta_i\le 1$, as it is a linear or bilinear {(therefore harmonic)} function with boundary values between 0 and 1, and $\theta_i \equiv 1$ on $\Omega_i$ therefore $\nabla \theta_i \equiv 0$ on $\Omega_i$. Further, on the overlap strip $\Omega_{i,\delta_i}$ the function $\nabla \theta_i$ is bounded by $C/\delta_i$ where $C$ is the mesh regularity constant. This can be derived by mapping to the respective reference element and using that the differential of the mapping is bounded by the shape regularity constant.
Using the above definition of $\hat{u}$, a Poincaré inequality, and \cref{eq:IHuhat-estimate} we find that
\begin{align}
  \|w\|^2_{L^2(\Omega_i)} &= \|u-I_{\mathrm{GDSW}}^Hu\|^2_{L^2(\Omega_i)} && \notag\\
  &= \|u-\bar{u}_{\Omega_i} -  (I_{\mathrm{GDSW}}^H u - \bar{u}_{\Omega_i})\|^2_{L^2(\Omega_i)} && \hspace{3.45cm}\makebox[0pt][r]{$\qquad[\pm\,\bar{u}_{\Omega_i}]$}\notag\\
  &= \|u-\bar{u}_{\Omega_i} - (I_{\mathrm{GDSW}}^H u - I_{\mathrm{GDSW}}^H(\bar{u}_{\Omega_i}))\|^2_{L^2(\Omega_i)} && \hspace{3.45cm}\makebox[0pt][r]{$\qquad[I_{\mathrm{GDSW}}^H u_c = u_c]$}\notag\\
  &= \|\hat{u} - I_{\mathrm{GDSW}}^H \hat{u}\|^2_{L^2(\Omega_i)} && \hspace{3.45cm}\makebox[0pt][r]{$\qquad[\text{def.\ }\hat{u}]$} \label{wL2}\\
  &\le 2( \|\hat{u}\|^2_{L^2(\Omega_i)} + \|I_{\mathrm{GDSW}}^H \hat{u}\|^2_{L^2(\Omega_i)}) && \notag\\
  &\le C H_i^2\left(1+\log\left(\tfrac{H_i}{h_i}\right)\right)\left(1+\log\left(\tfrac{H_ik^2}{h_i}\right)\right)|u|^2_{H^1(\Omega_i)}. && \hspace{3.45cm}\makebox[0pt][r]{$\qquad[\text{\cref{lem:poincare}},\ \rm eq.\, \eqref{eq:IHuhat-estimate}]$}\notag
\end{align}
 With \eqref{localContr}, we have
\begin{align*}
  \tilde{a}_i(u_i,u_i)
  &\le C\left(\left(1+\tfrac{H_i}{\delta_i}\right)|w|^2_{H^1(\Omega'_i)} + \tfrac{1}{H_i\delta_i}\|w\|^2_{L^2(\Omega'_i)}\right) \\
  &\le C \left(1+\tfrac{H_i}{\delta_i}\right) \condsum{j}{\Omega'_j \cap \Omega'_i \neq \emptyset} |w|^2_{H^1(\Omega_j)} + C\tfrac{1}{H_i\delta_i}\condsum{j}{\Omega'_j \cap \Omega'_i \neq \emptyset} \|w\|^2_{L^2(\Omega_j)}.
\end{align*}
Using the assumption that there is a constant such that $C H\le H_i$ for every $1\le i\le N$, \cref{wH1}
and \cref{wL2}, we have 
\begin{align*}
  \tilde{a}_i(u_i,u_i) &\le C \left(1+\tfrac{H_i}{\delta_i}\right) \condsum{j}{\Omega'_j \cap \Omega'_i \neq \emptyset} \left(1+\log\left(\tfrac{H_j}{h_j}\right)\right)\left(1+\log\left(\tfrac{H_jk^2}{h_j}\right)\right) |{u}|^2_{H^1(\Omega_j)}. 
\end{align*}
Now, the result for GDSW follows from summing over all subdomains: 
\begin{align*}
  \sum_{i=1}^{N} \tilde{a}_i(u_i,u_i) & \le \sum_{i=1}^{N}C\left(1+\tfrac{H_i}{\delta_i}\right) \condsum{j}{\Omega'_j \cap \Omega'_i \neq \emptyset} \left(1+\log\left(\tfrac{H_j}{h_j}\right)\right)\left(1+\log\left(\tfrac{H_jk^2}{h_j}\right)\right)|u|_{H^1(\Omega_j)}^2 \\
  &\le C \bigl(1+\tfrac{H}{\delta}\bigr)\left(1+\log\left(\tfrac{H}{h}\right)\right)\bigl(1+\log\bigl(\tfrac{Hk^2}{h}\bigr)\bigr)a(u,u).
\end{align*}
  For RGDSW, we can write each basis function $\phi^{\mathcal{P}_{\mathbf{v}}}$ as
  \[
     \phi^{\mathcal{P}_{\mathbf{v}}} = \theta_{\mathcal{V}_\mathbf{v}} + \frac{1}{n_{\mathcal{E}_\mathbf{v}}} \sum_{\mathcal{E}\in \mathcal{E}_\mathbf{v}} \theta_{\mathcal{E}},
  \]
  where $\theta_{\mathcal{V}_\mathbf{v}}$ and $\theta_{\mathcal{E}}$ are the standard GDSW vertex and edge basis functions of \Cref{def:vertex-edge-fn}. By the triangle inequality, the bounds of \Cref{thm:edgelemma} and \Cref{rem:vertex-fn} yield $|\phi^{\mathcal{P}_{\mathbf{v}}}|^2_{H^1(\Omega_i)} \le C\bigl(1+\log(H_i/h_i)\bigr)$, with $C$ independent of $k$. Define the RGDSW coarse interpolation $I_{\mathrm{RGDSW}}^H u := \sum_{\mathbf{v}} u(\mathbf{v})\,\phi^{\mathcal{P}_{\mathbf{v}}}$. The analog of \eqref{eq:u0estimate} reads
  \begin{equation*}
  \begin{split}
    |I_{\mathrm{RGDSW}}^H u|^2_{H^1(\Omega_i)}
    &= |I_{\mathrm{RGDSW}}^H u - \bar{u}_{\Omega_i}|^2_{H^1(\Omega_i)} \\
    &= \left\lvert \sum_{\mathbf{v}} \bigl(u(\mathbf{v}) - \bar{u}_{\Omega_i}\bigr)\, \phi^{\mathcal{P}_{\mathbf{v}}} \right\rvert^2_{H^1(\Omega_i)} \\
    &\le \|u - \bar{u}_{\Omega_i}\|^2_{L^\infty(\Omega_i)} \sum_{\mathbf{v}} | \phi^{\mathcal{P}_{\mathbf{v}}}|^2_{H^1(\Omega_i)} \\
    &\le C\left(1+\log\left(\tfrac{H_i}{h_i}\right)\right)\left(1+\log\left(\tfrac{H_ik^2}{h_i}\right)\right)|u|^2_{H^1(\Omega_i)},
  \end{split}
  \end{equation*}
  where the last inequality uses \Cref{thm:discrete-sobolev} for the $L^\infty$-factor, the bound $|\phi^{\mathcal{P}_{\mathbf{v}}}|^2_{H^1(\Omega_i)}\le C(1+\log(H_i/h_i))$ derived above, and the fact that each subdomain has a finite number of vertices in the non-overlapping decomposition. The rest of the proof of \Cref{thm:condbound} can be carried out analogously to the one for GDSW.\hfill\proofbox

\begin{remark}
  With the current available theoretical tools, we were unable to derive a similar result for the virtual element method. As far as we are aware, there is no suitable family of interpolation operators that map between the VEM spaces $V^{2k}_h$ and $V^k_h$ that are uniformly $H^1$-stable and have optimal approximation properties. Furthermore, in order to prove a discrete Sobolev inequality for degree $k$ VEM functions, one needs to control the nonpolynomial part of the VEM functions, which is, to our knowledge only possible with the very general result in \cite[(4.5.3) Lemma]{brenner_mathematical_2008} for every fixed $k$, which then yields the correct bound, but the constant does depend on $k$ as well. Finally, the high-order edge lemma also relies on the above mentioned interpolation operators, and therefore the same issues arise.
\end{remark}

\begin{figure}[t]
\centering
\includegraphics[width=\textwidth]{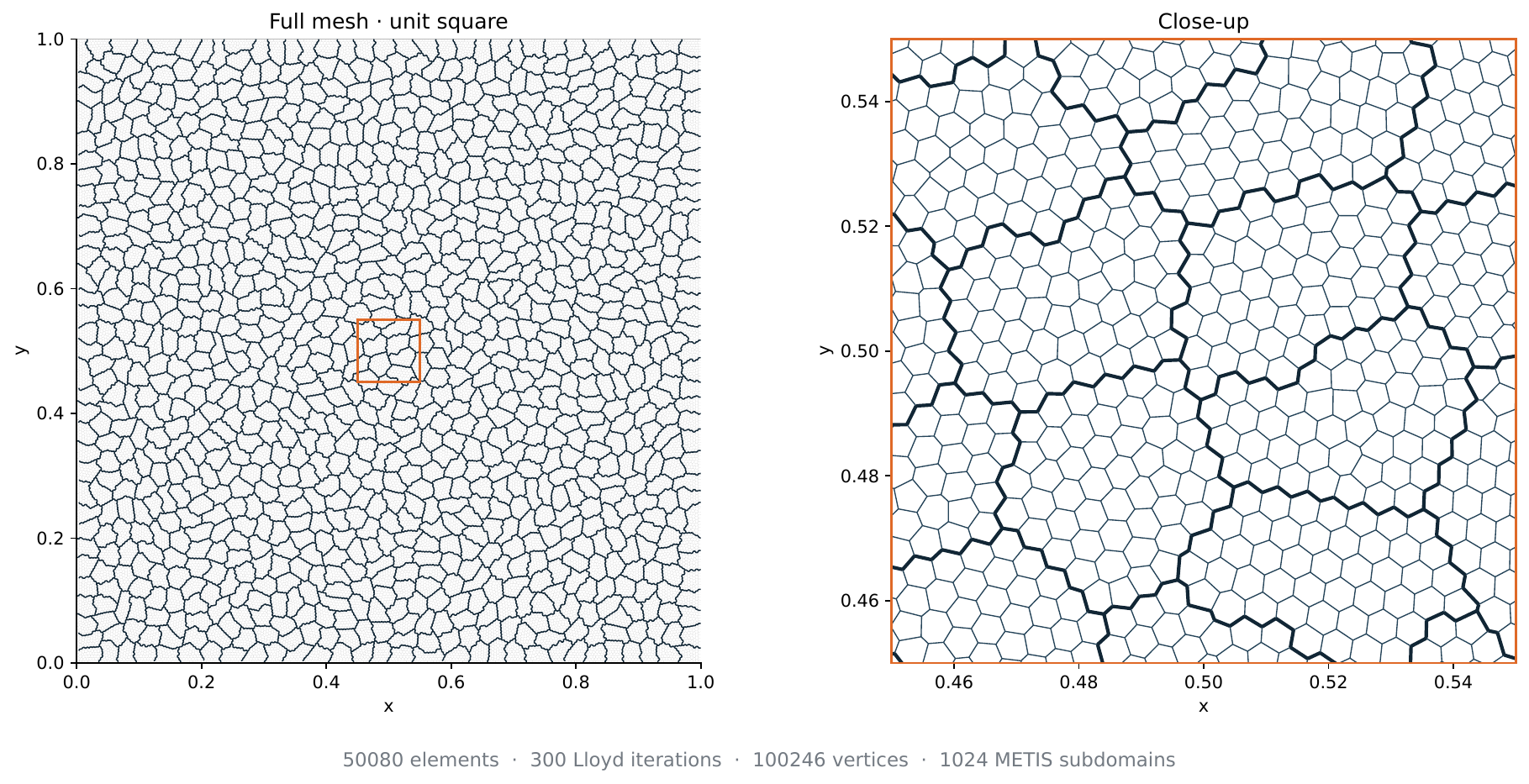}
\caption{Lloyd-regularized polygonal mesh with 50\,080 elements obtained with 300 Lloyd iterations, partitioned into 1\,024 subdomains by METIS.}\label{fig:2d_lloyd_mesh}
\end{figure}

\begin{table}[t]
\centering
\caption{Condition numbers and PCG iteration counts for $\mathbb{P}_k$-FEM (orthogonal polynomial basis) and VEM (d-recipe stabilization, orthogonal polynomial basis) on a structured triangular mesh obtained from a $160\times 160$ square mesh by fixed-diagonal subdivision (overlap 1).}\label{tab:2d_160_kappa_tri}
\begin{adjustbox}{width=\textwidth}
\begin{tabular}{|c|c|r|r||r|r||r|r||r|r||r|r||r|r|}
\cline{3-14}
\multicolumn{2}{c|}{} & nSub & Coarse & \multicolumn{2}{c||}{$k=1$} & \multicolumn{2}{c||}{$k=3$} & \multicolumn{2}{c||}{$k=5$} & \multicolumn{2}{c||}{$k=7$} & \multicolumn{2}{c|}{$k=9$} \\
\cline{3-14}
\multicolumn{2}{c|}{} &  &  & $\kappa$ & it & $\kappa$ & it & $\kappa$ & it & $\kappa$ & it & $\kappa$ & it \\
\cline{3-14}
\noalign{\vskip 0.6ex}
\cline{1-14}
\multirow{8}{*}{\STAB{\rotatebox[origin=c]{90}{FEM}}}
& \multirow{4}{*}{\STAB{\rotatebox[origin=c]{90}{GDSW}}}
& $4\times 4$ & 33 & 26.01 & 37 & 39.03 & 46 & 43.20 & 49 & 45.19 & 52 & 46.32 & 54 \\
& & $8\times 8$ & 161 & 17.44 & 35 & 25.26 & 44 & 28.14 & 48 & 29.89 & 49 & 31.16 & 50 \\
& & $16\times 16$ & 705 & 12.53 & 32 & 18.59 & 41 & 20.98 & 44 & 22.56 & 45 & 23.75 & 48 \\
& & $32\times 32$ & 2\,945 & 8.93 & 28 & 14.36 & 37 & 16.55 & 41 & 18.04 & 44 & 19.20 & 46 \\
\cline{2-14}
& \multirow{4}{*}{\STAB{\rotatebox[origin=c]{90}{RGDSW}}}
& $4\times 4$ & 9 & 24.43 & 37 & 36.58 & 47 & 39.91 & 50 & 41.57 & 53 & 42.61 & 54 \\
& & $8\times 8$ & 49 & 17.24 & 37 & 24.77 & 46 & 27.36 & 49 & 28.91 & 51 & 30.05 & 54 \\
& & $16\times 16$ & 225 & 12.54 & 33 & 18.56 & 43 & 20.89 & 46 & 22.42 & 48 & 23.57 & 49 \\
& & $32\times 32$ & 961 & 8.86 & 28 & 14.31 & 37 & 16.52 & 41 & 18.03 & 44 & 19.19 & 45 \\
\hline
\hline
\multirow{8}{*}{\STAB{\rotatebox[origin=c]{90}{VEM}}}
& \multirow{4}{*}{\STAB{\rotatebox[origin=c]{90}{GDSW}}}
& $4\times 4$ & 33 & 26.01 & 37 & 46.51 & 64 & 48.22 & 64 & 49.24 & 67 & 50.65 & 69 \\
& & $8\times 8$ & 161 & 17.44 & 35 & 28.16 & 56 & 31.65 & 59 & 33.90 & 59 & 35.59 & 59 \\
& & $16\times 16$ & 705 & 12.53 & 32 & 20.34 & 52 & 23.93 & 56 & 26.31 & 57 & 28.10 & 58 \\
& & $32\times 32$ & 2\,945 & 8.93 & 28 & 15.63 & 48 & 19.26 & 52 & 21.67 & 55 & 23.50 & 56 \\
\cline{2-14}
& \multirow{4}{*}{\STAB{\rotatebox[origin=c]{90}{RGDSW}}}
& $4\times 4$ & 9 & 24.43 & 37 & 43.64 & 65 & 45.29 & 61 & 45.99 & 64 & 47.20 & 65 \\
& & $8\times 8$ & 49 & 17.24 & 37 & 27.67 & 60 & 31.01 & 61 & 33.17 & 65 & 34.79 & 66 \\
& & $16\times 16$ & 225 & 12.54 & 33 & 20.32 & 55 & 23.88 & 57 & 26.23 & 60 & 28.01 & 62 \\
& & $32\times 32$ & 961 & 8.86 & 28 & 15.64 & 48 & 19.27 & 53 & 21.69 & 56 & 23.52 & 58 \\
\hline
\end{tabular}
\end{adjustbox}
\end{table}
\begin{table}[h]
\centering
\caption{Condition numbers and PCG iteration counts for $\mathbb{Q}_k$-FEM (orthogonal polynomial basis) and VEM (d-recipe stabilization, orthogonal polynomial basis) on a $160\times 160$ square mesh with overlap 1.}\label{tab:2d_160_kappa}
\begin{adjustbox}{width=\textwidth}
\begin{tabular}{|c|c|r|r||r|r||r|r||r|r||r|r||r|r|}
\cline{3-14}
\multicolumn{2}{c|}{} & nSub & Coarse & \multicolumn{2}{c||}{$k=1$} & \multicolumn{2}{c||}{$k=3$} & \multicolumn{2}{c||}{$k=5$} & \multicolumn{2}{c||}{$k=7$} & \multicolumn{2}{c|}{$k=9$} \\
\cline{3-14}
\multicolumn{2}{c|}{} &  &  & $\kappa$ & it & $\kappa$ & it & $\kappa$ & it & $\kappa$ & it & $\kappa$ & it \\
\cline{3-14}
\noalign{\vskip 0.6ex}
\cline{1-14}
\multirow{8}{*}{\STAB{\rotatebox[origin=c]{90}{FEM}}}
& \multirow{4}{*}{\STAB{\rotatebox[origin=c]{90}{GDSW}}}
& $4\times 4$ & 33 & 25.27 & 34 & 38.92 & 43 & 43.12 & 48 & 45.12 & 51 & 46.25 & 55 \\
& & $8\times 8$ & 161 & 16.56 & 33 & 24.67 & 42 & 27.82 & 46 & 29.72 & 47 & 31.05 & 52 \\
& & $16\times 16$ & 705 & 11.61 & 31 & 17.93 & 39 & 20.64 & 42 & 22.37 & 47 & 23.63 & 50 \\
& & $32\times 32$ & 2\,945 & 8.11 & 26 & 13.67 & 36 & 16.18 & 40 & 17.84 & 44 & 19.07 & 48 \\
\cline{2-14}
& \multirow{4}{*}{\STAB{\rotatebox[origin=c]{90}{RGDSW}}}
& $4\times 4$ & 9 & 23.12 & 34 & 35.58 & 43 & 39.40 & 48 & 41.34 & 52 & 42.49 & 54 \\
& & $8\times 8$ & 49 & 16.18 & 34 & 23.90 & 42 & 26.83 & 44 & 28.61 & 49 & 29.86 & 53 \\
& & $16\times 16$ & 225 & 11.58 & 31 & 17.84 & 39 & 20.49 & 43 & 22.19 & 46 & 23.43 & 49 \\
& & $32\times 32$ & 961 & 8.02 & 25 & 13.63 & 35 & 16.17 & 39 & 17.84 & 43 & 19.07 & 47 \\
\hline
\hline
\multirow{8}{*}{\STAB{\rotatebox[origin=c]{90}{VEM}}}
& \multirow{4}{*}{\STAB{\rotatebox[origin=c]{90}{GDSW}}}
& $4\times 4$ & 33 & 25.47 & 34 & 44.60 & 50 & 47.54 & 55 & 48.95 & 56 & 50.39 & 56 \\
& & $8\times 8$ & 161 & 16.80 & 34 & 27.02 & 46 & 31.16 & 52 & 33.68 & 54 & 35.33 & 56 \\
& & $16\times 16$ & 705 & 11.86 & 31 & 19.44 & 42 & 23.50 & 50 & 26.10 & 53 & 27.84 & 55 \\
& & $32\times 32$ & 2\,945 & 8.33 & 27 & 14.81 & 39 & 18.85 & 48 & 21.48 & 50 & 23.24 & 53 \\
\cline{2-14}
& \multirow{4}{*}{\STAB{\rotatebox[origin=c]{90}{RGDSW}}}
& $4\times 4$ & 9 & 23.50 & 34 & 40.98 & 48 & 44.37 & 55 & 45.40 & 56 & 46.77 & 56 \\
& & $8\times 8$ & 49 & 16.48 & 34 & 26.27 & 47 & 30.41 & 54 & 32.78 & 55 & 34.36 & 56 \\
& & $16\times 16$ & 225 & 11.84 & 31 & 19.36 & 43 & 23.43 & 51 & 25.99 & 52 & 27.70 & 54 \\
& & $32\times 32$ & 961 & 8.25 & 26 & 14.80 & 39 & 18.86 & 47 & 21.48 & 49 & 23.24 & 52 \\
\hline
\end{tabular}
\end{adjustbox}
\end{table}

\section{Numerical results}
\label{sec:results}
All computations in this section use the analytical solution
\[
u(x,y)=\sin(\pi x)\sin(\pi y)
\]
on the unit square.
The experiments are carried out with an in-house \textsc{Matlab} R2025b implementation containing the FEM and VEM assemblers, the GDSW and RGDSW coarse-space construction, and the PCG solver. The VEM implementation is adapted from \cite{herrera_highorder_2d_vem_2023,sutton_vem_50lines_2017}.
PCG is stopped when the Euclidean norm of the unpreconditioned residual has been reduced by $10^{-8}$, and condition numbers are estimated by a Lanczos procedure.
Polygonal meshes are generated with PolyMesher~\cite{talischi_polymesher_2012} starting from randomly placed seeds. A Voronoi tessellation of the unit square is then computed and regularized by $300$ Lloyd iterations~\cite{du_centroidal_1999,lloyd_least_1982}, which produces well-shaped elements of similar size; see \Cref{fig:2d_lloyd_mesh}. We refer to this mesh as Lloyd-regularized mesh.

We clarify that the ``Coarse'' column in all tables reports the coarse space dimension for $k=1$. On polygonal meshes, short subdomain edges may be identified with vertices in the interface decomposition, causing small variations in the coarse space dimension when going from $k=1$ to $k=2$. For the structured triangular and quadrilateral meshes in \Cref{tab:2d_160_kappa,tab:2d_160_kappa_tri}, this does not occur and the coarse space dimension is independent of $k$.

\begin{figure}[t]
\centering
\includegraphics[width=\textwidth]{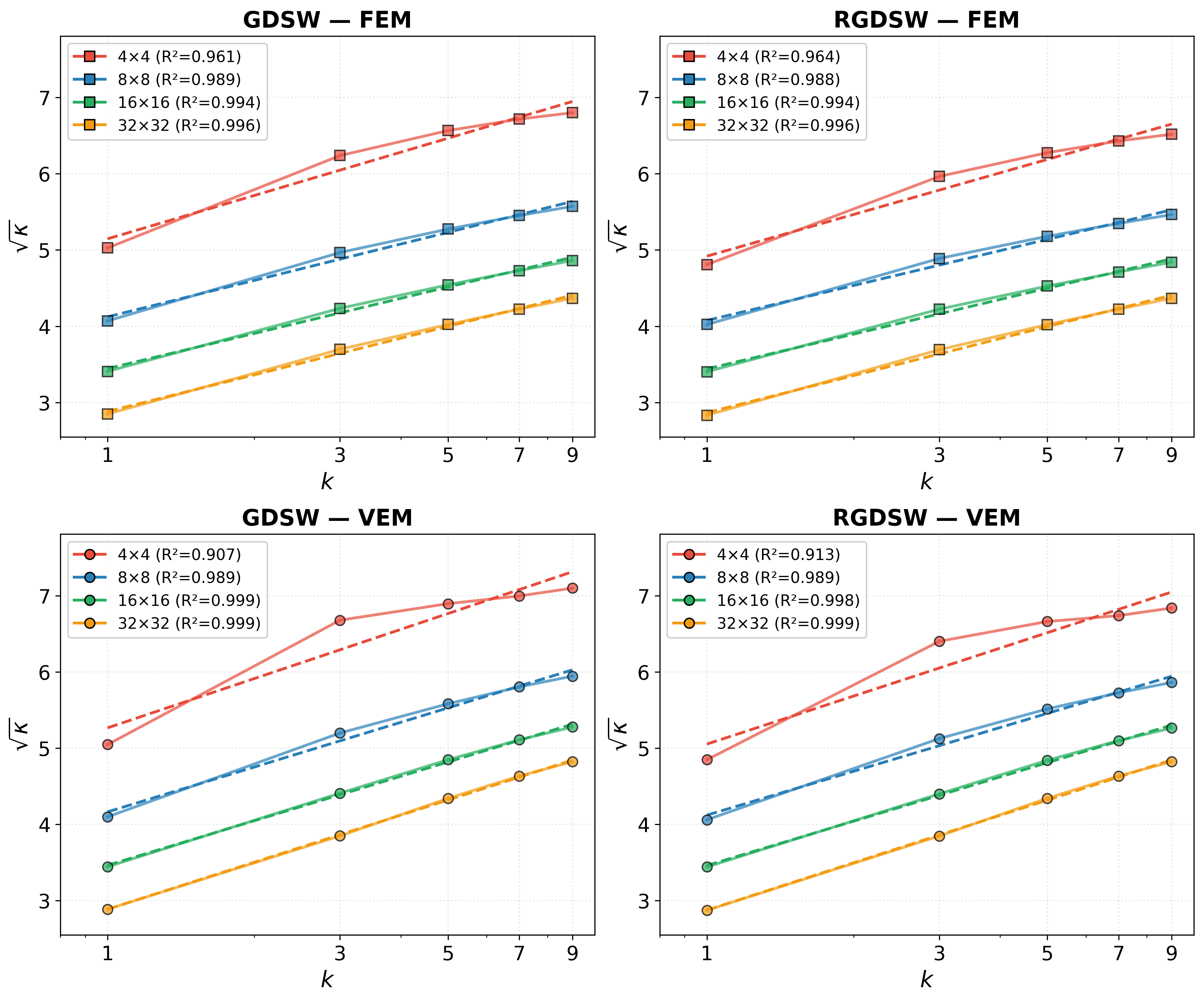}
\vspace{-0.7cm}
\caption{Condition numbers $\sqrt{\kappa}$ versus polynomial degree $k$ for VEM and $\mathbb{Q}_k$-FEM on a $160\times 160$ square mesh with overlap 1. Top row: FEM; bottom row: VEM. Left column: GDSW; right column: RGDSW. Each panel shows results for subdomain layouts $4\times 4$, $8\times 8$, $16\times 16$, $32\times 32$. Dashed lines indicate the fit $\kappa \sim (1+\log(H/h))(1+\log(Hk^2/h))$.}\label{fig:2d_vem_fem_kappa}
\end{figure}

\Cref{tab:2d_160_kappa,tab:2d_160_kappa_tri} report condition number estimates and PCG iteration counts for $\mathbb{P}_k$-FEM on structured triangular meshes (obtained by fixed-diagonal subdivision) and $\mathbb{Q}_k$-FEM on structured quadrilateral meshes, together with VEM results, all on a $160\times 160$ base mesh with overlap 1, for polynomial degrees $k=1,3,5,7,9$ and subdomain layouts $4\times 4$, $8\times 8$, $16\times 16$, and $32\times 32$. For the VEM discretization we use d-recipe stabilization with an orthogonal polynomial basis. Reading the tables vertically for a fixed polynomial degree, both GDSW and RGDSW exhibit the expected behavior: condition numbers decrease as the number of subdomains increases (i.e., as the ratio $H/h$ decreases), and the iteration counts remain moderate throughout. Reading the tables horizontally, increasing the polynomial degree from $k=1$ to $k=9$ leads to a gradual growth in condition numbers, consistent with the $(1+\log(H/h))(1+\log(Hk^2/h))$ dependency predicted by the theory for high-order FEM. For $k=1$, the FEM and VEM condition numbers are very similar, while at higher polynomial degrees, VEM produces moderately larger condition numbers than FEM.

The RGDSW coarse space achieves a substantial reduction in coarse problem dimension compared to GDSW by a factor of about three, while maintaining comparable condition numbers and iteration counts. At larger subdomain counts, the GDSW and RGDSW condition numbers converge. For the $32\times 32$ layout, the difference is negligible for both FEM and VEM. \Cref{fig:2d_vem_fem_kappa} visualizes the condition numbers $\sqrt{\kappa}$ versus the polynomial degree $k$ for both discretizations, with dashed lines indicating the fit $\kappa\sim (1+\log(H/h))(1+\log(Hk^2/h))$. The data points for both FEM and VEM closely follow this logarithmic scaling trend.

As an additional study on an unstructured polygonal mesh, \Cref{tab:2d_cvt_lloyd300_vem_kappa} reports the same PCG/Lanczos metrics for VEM on the Lloyd-regularized mesh with $50\,080$ polygonal elements ($\approx\!100\,000$ to $\approx\!3\,100\,000$ \ac{DOFs} for $k=1$ to $k=9$), using overlap 1 and METIS partitioning; see \Cref{fig:2d_lloyd_mesh}. The condition numbers on this unstructured mesh are noticeably higher than on the structured mesh, yet the qualitative behavior is similar. Both GDSW and RGDSW maintain robust scalability across subdomain counts ranging from $16$ to $1\,024$, and the condition numbers grow only mildly with $k$, with only slight increases beyond $k=5$. The RGDSW coarse space again achieves a significant reduction in dimension compared to GDSW while delivering nearly identical iteration counts. \Cref{fig:2d_lloyd_vem_kappa} visualizes these condition numbers and shows that the logarithmic scaling is observed also on general polygonal meshes, although the fit is less close than on the structured square mesh.

\begin{table}[t]
\centering
\caption{Condition numbers and PCG iteration counts for VEM on the Lloyd-regularized mesh, overlap 1, d-recipe stabilization, orthogonal polynomial basis, and METIS partitioning.}\label{tab:2d_cvt_lloyd300_vem_kappa}
\begin{adjustbox}{width=0.98\textwidth}
\begin{tabular}{|c|r|r||r|r||r|r||r|r||r|r||r|r|}
\cline{2-13}
\multicolumn{1}{c|}{} & & nDofs & \multicolumn{2}{c||}{$100\,246$} & \multicolumn{2}{c||}{$551\,136$} & \multicolumn{2}{c||}{$1\,202\,346$} & \multicolumn{2}{c||}{$2\,053\,876$} & \multicolumn{2}{c|}{$3\,105\,726$} \\
\cline{2-13}
\multicolumn{1}{c|}{} & nSub & Coarse & \multicolumn{2}{c||}{$k=1$} & \multicolumn{2}{c||}{$k=3$} & \multicolumn{2}{c||}{$k=5$} & \multicolumn{2}{c||}{$k=7$} & \multicolumn{2}{c|}{$k=9$} \\
\cline{2-13}
\multicolumn{1}{c|}{} &  &  & $\kappa$ & it & $\kappa$ & it & $\kappa$ & it & $\kappa$ & it & $\kappa$ & it \\
\cline{2-13}
\noalign{\vskip 0.6ex}
\cline{1-13}
\multirow{5}{*}{\STAB{\rotatebox[origin=c]{90}{GDSW}}}
& 16 & 51 & 32.44 & 58 & 56.75 & 60 & 63.25 & 64 & 64.59 & 64 & 65.28 & 65 \\
& 64 & 255 & 22.31 & 51 & 38.38 & 51 & 43.69 & 54 & 44.50 & 54 & 45.06 & 55 \\
& 256 & 1\,155 & 18.21 & 46 & 31.39 & 46 & 36.71 & 50 & 37.50 & 51 & 38.09 & 51 \\
& 512 & 2\,383 & 15.55 & 42 & 27.10 & 43 & 32.55 & 47 & 33.75 & 48 & 34.46 & 48 \\
& 1\,024 & 4\,865 & 13.71 & 39 & 21.94 & 39 & 26.97 & 43 & 27.32 & 43 & 27.28 & 45 \\
\hline
\hline
\multirow{5}{*}{\STAB{\rotatebox[origin=c]{90}{RGDSW}}}
& 16 & 18 & 31.67 & 56 & 54.91 & 58 & 61.52 & 62 & 62.81 & 64 & 63.34 & 64 \\
& 64 & 96 & 22.50 & 51 & 38.68 & 51 & 43.87 & 55 & 44.61 & 56 & 45.11 & 57 \\
& 256 & 450 & 17.26 & 45 & 28.80 & 45 & 34.41 & 49 & 35.12 & 50 & 35.45 & 51 \\
& 512 & 936 & 14.78 & 41 & 26.60 & 42 & 30.35 & 46 & 31.42 & 47 & 31.85 & 47 \\
& 1\,024 & 1\,921 & 12.72 & 38 & 22.30 & 40 & 26.86 & 44 & 27.07 & 44 & 27.43 & 45 \\
\hline
\end{tabular}
\end{adjustbox}
\end{table}

\begin{figure}[t]
\centering
\includegraphics[width=\textwidth]{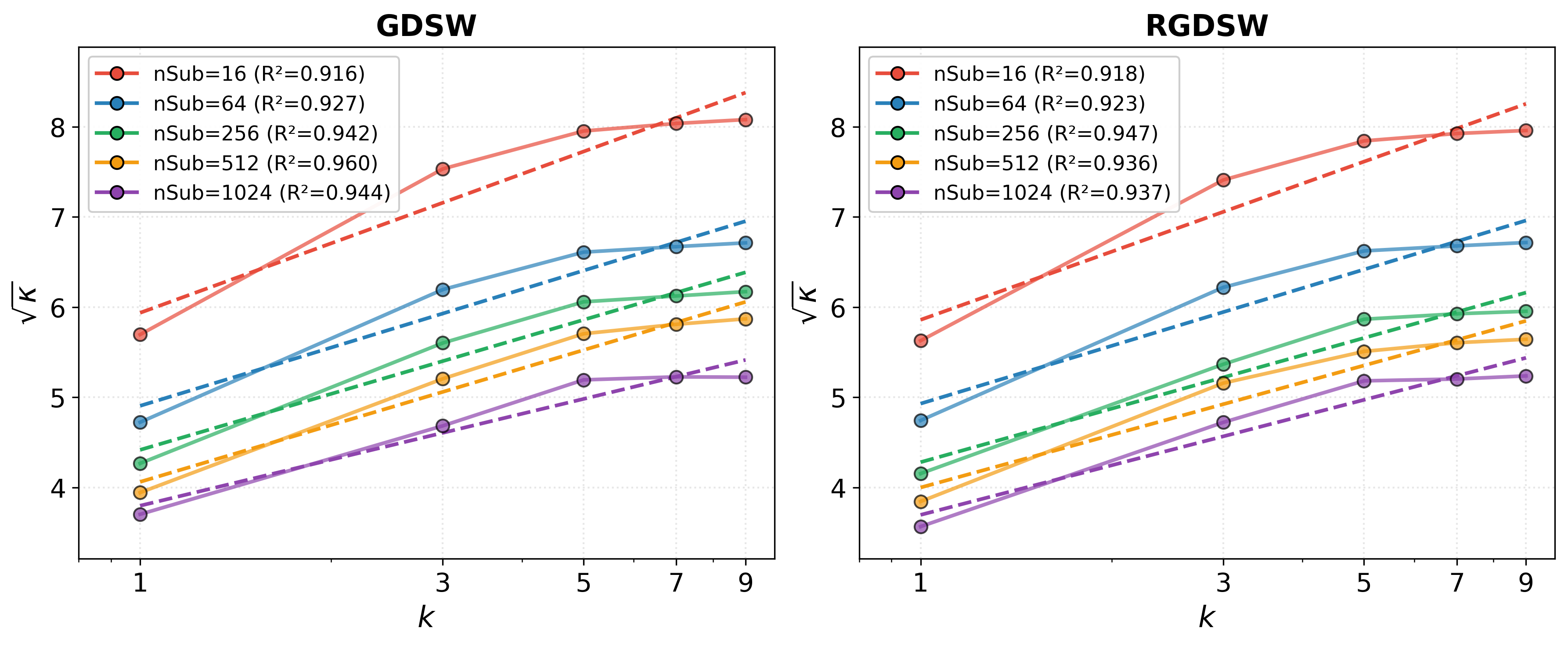}
\vspace{-0.7cm}
\caption{Condition numbers $\sqrt{\kappa}$ versus polynomial degree $k$ for VEM on the Lloyd-regularized mesh, overlap 1. Left: GDSW; right: RGDSW. Each panel shows results for nSub$=16,64,256,512,1024$. Dashed lines indicate the fit $\kappa \sim (1+\log(H/h))(1+\log(Hk^2/h))$.}\label{fig:2d_lloyd_vem_kappa}
\end{figure}

\section{Conclusion}

This paper analyzes GDSW and RGDSW two-level overlapping Schwarz preconditioners for two-dimensional high-order FEM and VEM Poisson discretizations.
For conforming $\mathbb{P}_k$ and $\mathbb{Q}_k$ finite element spaces, the main theoretical result is a condition number estimate in which the dependence on the polynomial degree enters explicitly.

The numerical results confirm scalability and $k$-robustness of both GDSW and RGDSW across polynomial degrees $k=1,\ldots,9$ and subdomain counts ranging from $16$ to $1\,024$. Condition numbers grow only slightly with $k$, and they are consistent with the logarithmic dependence predicted by the theory. For $k=1$, FEM and VEM condition numbers are almost identical; at higher polynomial degrees, VEM can produce slightly larger condition numbers than FEM, while the qualitative behavior remains the same. The RGDSW coarse space achieves a significant reduction in coarse-space dimension compared to GDSW, while delivering nearly identical iteration counts.

The extension of the analysis to high-order VEM, and in particular a proof that is uniform in the polynomial degree, remains open.
Further developments will also include three-dimensional settings and additional model problems.

\section*{Acknowledgments}
This project has received funding from the Deutsche Forschungsgemeinschaft (DFG) as part of the Forschungsgruppe (Research Unit) 5134 ``Solidification Cracks During Laser Beam Welding -- High Performance Computing for High Performance Processing'' under DFG project number 434946896. The authors gratefully acknowledge the scientific support and HPC resources provided by the Erlangen National High Performance Computing Center (NHR@FAU) of the Friedrich-Alexander-Universit\"at Erlangen-N\"urnberg (FAU) under the NHR project k109be10. NHR funding is provided by federal and Bavarian state authorities. NHR@FAU hardware is partially funded by the German Research Foundation (DFG) - 440719683.

\section*{Author contributions and AI usage} All authors are listed in alphabetical order. The contributions are specified below using the CRediT taxonomy. TB: Conceptualization, Methodology, Writing - original draft, review \& editing. AK: Conceptualization, Methodology, Formal Analysis, Writing - review \& editing. ML: Conceptualization, Methodology, Writing - review \& editing. AW: Conceptualization, Methodology, Formal Analysis, Data Curation, Software, Writing - original draft.
Models were used to assist with formatting and improving the clarity of portions of the text, as well as to help with the development and debugging of the \textsc{Matlab} implementation. The authors assume full responsibility for all content.

\bibliographystyle{siamplain}
\bibliography{refer}

\end{document}